\newcommand{\R}{{\mathbb R}}
\newcommand{\N}{{\mathbb N}}
\newcommand{\Z}{{\mathbb Z}}
\newcommand{\Q}{{\mathbb Q}}
\newcommand{\C}{{\mathbb C}}
\newcommand*{\tttt}[2][-3mu]{\ensuremath{\mskip1mu\prescript{\smash{\mathrm t\mkern#1}}{}{\mathstrut#2}}}%
\DeclareMathOperator{\Imag}{Im}
\DeclareMathOperator{\Real}{Re}
\DeclareMathOperator{\Ker}{Ker}
\DeclareMathOperator{\Sym}{Sym}
\DeclareMathOperator{\Skew}{Skew}
\DeclareMathOperator{\Id}{Id}
\newtheorem{prop}{Proposition}[section]
\newtheorem{thm}[prop]{Theorem}
\newtheorem{cor}[prop]{Corollary}
\newtheorem{lem}[prop]{Lemma}
\newtheorem*{defi}{Definition}
\title{Quantum limits for harmonic oscillator}
\author{Elie Studnia}
\begin{document}

\maketitle

\tableofcontents

\textbf{Acknowledgements}
This work was written during a visit at UC Berkeley, under the direction of S. Dyatlov and M. Zworski. We thank this institution and professor Dyatlov and Zworski for their help, suggestions and comments.

\newpage

\section*{Introduction}

In this note we consider high energy eigenfunctions of the harmonic oscillator in $\R^d$ and prove that any invariant measure on the energy surface can be written as a weak limit of eigenfunctions. 

To motivate our result, let us recall some facts about the positive Laplacian $\Delta_g$ on a compact Riemannian manifold $(M,g)$. In \cite{jac-zel} it is for instance shown that when $(M,g) = S^n$ with its canonical metric, any invariant measure on $S^*M$ is a weak limit of eigenfunctions. 

In \cite{zel-zw} it is shown that if the geodesic flow on $S^*M$ is ergodic, quantum ergodicity for the eigenfunctions of the Laplacian occurs (in particular, there is a density one subsequence of these that go to the uniform probability measure).

\smallskip
\noindent

{\bf Theorem}{\em \,\, Let $d \geq 1$ be an integer, $\mu$ be a probability measure over $S^{2d-1}$ that is invariant under the Hamiltonian flow of $p: (x,\xi) \in \R^{2d} \longmapsto |x|^2+|\xi|^2$. Then, there exists sequences $(h_k)$ of positive real numbers, $(u_k)$ of smooth $L^2(\R^d)$ functions such that
\begin{enumerate}
\item $(h_k) \rightarrow 0$
\item $\forall k,\, (-h_k^2\Delta+|x^2|)u_k = u_k$
\item $\|u_k\|_{L^2} = 1$
\item $$\forall a \in \mathscr{C}^{\infty}_c(\R^{2d}),\, \langle a^w(x,h_kD)u_k\,,u_k \rangle \longrightarrow \int_{S^{2d-1}}{ad\mu}$$
\end{enumerate}}

\smallskip
\noindent

This theorem is inspired by a result of Jakobson-Zelditch \cite{jac-zel}, asserting that any probability measure over $S^*M$, where $M$ is the $d$-dimension sphere, endowed with its canonical Riemannian structure, is a weak-* limit of Wigner measures associated with eigenfunctions of the Laplacian if, and only if, it is invariant under the geodesic flow. A similar result to ours was also proved in \cite[Prop. 5]{OVVB} and generalized in \cite[Th 1.7]{vic-arn}. Related results on-self adjoint perturbations of the harmonic oscillator were also studied in \cite{var-gri}.

\smallskip
\noindent
To a classical observable $p = p(x,\xi)$ we associate the Hamilton vector field $H_p(x,\xi) = (\partial_{\xi}p(x,\xi),-\partial_xp(x,\xi))$, and the related flow. In our case, $p(x,\xi) = |x|^2+|\xi|^2$ and $H_p(x,\xi) = (2\xi,-2x)$. We see that if $z=(x,\xi) \in S^{2d-1}$, $(2\xi,-2x) \in T_zS^{2d-1}$, therefore $S^{2d-1}$ is invariant under the flow. 
It is easy to check that the trajectory starting from $(x_0,\xi_0)$ has the parametric equation:
$$x(t) = (\cos{2t})x_0+(\sin{2t})\xi_0,\,\xi(t) = (\cos{2t})\xi_0-(\sin{2t})x_0\,,$$
\noindent
therefore the trajectories of the flow in $S^{2d-1}$ are disjoint great circles (this will be important later on), which we will call (for the sake of simplicity) the orbits of $S^{2d-1}$.

\medskip

We adopt the strategy of \cite{jac-zel} to our setting and proceed as follows:
\smallskip
\begin{enumerate}
\item Prove the statement when $\mu$ is the uniform measure over one particular closed orbit, by using the $d=1$ case.
\item Use a group action to construct a family of eigenfunctions concentrating on any given closed orbit (that is, any uniform measure on any closed orbit of the flow; we will call them elementary measures). 
\item Prove that mixed terms from different orbits in step $2$ are negligible, and deduce from this the statement when $\mu$ is a convex combination of elementary measures (which we will call convex measures).
\item Show that for in suitable topology, the convex measures are dense in the set of all probability measues over $S^{2d-1}$ invariant under the flow, and that the set of measures that can be obtained in the theorem is closed. 
\end{enumerate}
\medskip
\noindent

Section $1$ gives preliminary results. Section $2$ proves the case $d=1$ and completes step $1$. Section $3$ is about finding the right group action to make step $2$ work. In section $4$ we complete the proof.

\newpage 

\section{Preliminaries}

\subsection{Review of semiclassical analysis}

We use the notations and results of \cite{ev-zw}, mostly chapters $4$, $5$ and $11$. The most important result we need throughout this note is \cite[Theorem 4.23]{ev-zw}:

\begin{thm}
\label{bounded}
Let $a \in S(1_{\R^{2d}})$. Then, for any $h > 0$, $a^w(x,hD)$ is a bounded operator from $L^2(\R^d)$ to itself, and we have, for some constants $C=C_d$, $M$ not depending on $d$:
$$\|a^w(x,hD)\|_{L^2 \rightarrow L^2} \leq C\sum_{|\alpha| \leq Md}{h^{|\alpha|/2}\|\partial_{\alpha}a\|_{\infty}}.$$
\end{thm}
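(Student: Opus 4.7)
The result is the semiclassical Calderón–Vaillancourt theorem; my plan is the standard two-step reduction, first scaling away $h$ and then applying a Cotlar–Stein argument on a phase-space partition of unity. Consider the unitary dilation $U$ on $L^2(\R^d)$ given by $Uu(x) = h^{d/4}u(h^{1/2}x)$. A direct computation gives $UxU^{-1} = h^{1/2}x$ and $U(hD)U^{-1} = h^{1/2}D$, hence $U a^w(x,hD) U^{-1} = \tilde a^w(x,D)$ where $\tilde a(x,\xi) = a(h^{1/2}x,h^{1/2}\xi)$. Since $\|\partial^\alpha \tilde a\|_\infty = h^{|\alpha|/2}\|\partial^\alpha a\|_\infty$, it suffices to prove the estimate at $h=1$: for $b \in S(1)$, $\|b^w(x,D)\|_{L^2 \to L^2} \leq C_d \sum_{|\alpha|\leq Md}\|\partial^\alpha b\|_\infty$.

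\textbf{Partition of unity and Cotlar--Stein.} Fix $\chi \in C_c^\infty(\R^{2d})$ supported in a ball of radius $1$ with $\sum_{m \in \Z^{2d}} \chi(\cdot - m) \equiv 1$, and write $b = \sum_m b_m$ with $b_m(z) = b(z)\chi(z-m)$. Each $b_m$ is supported in a fixed-radius ball around $m$, and every $C^k$ seminorm of $b_m$ is controlled uniformly in $m$ by $C^k$ seminorms of $b$. The plan is to apply Cotlar--Stein to $(b_m^w)_{m \in \Z^{2d}}$: it suffices to produce $\gamma \in \ell^1(\Z^{2d})$ with
\[ \|(b_m^w)^* b_n^w\|_{L^2 \to L^2}^{1/2} + \|b_m^w (b_n^w)^*\|_{L^2 \to L^2}^{1/2} \leq \gamma(m - n), \]
in which case Cotlar--Stein yields $\|b^w\|_{L^2\to L^2} \leq \|\gamma\|_{\ell^1}$.

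\textbf{Off-diagonal decay.} The Weyl composition formula expresses the symbol of $(b_m^w)^* b_n^w$ as an oscillatory integral whose amplitude is supported where one integration variable lies in $\mathrm{supp}\,b_m$ and another in $\mathrm{supp}\,b_n$. When $|m-n|$ is large these supports are far apart, so non-stationary phase (repeated integration by parts in the oscillatory factor) produces decay $\langle m - n\rangle^{-N}$ at the cost of $O(N)$ derivatives of $b$. Combining this with the easy $L^2$-boundedness of Weyl quantizations of compactly supported symbols (a Schur test on the kernel, which decays in $x - y$ like a Schwartz function) gives $\gamma(m-n) \leq C_N \langle m-n\rangle^{-N}$ times finitely many seminorms of $b$; any $N > 2d$ makes $\gamma$ summable.

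\textbf{Main obstacle.} The structural argument is routine once one trusts Cotlar--Stein; the delicate point is the bookkeeping of derivatives, which is what produces the linear-in-$d$ exponent $Md$. Both the Schur test on the kernel of a compactly supported symbol (integrability in $x-y$) and the non-stationary phase step (summability over $\Z^{2d}$) each cost of order $d$ derivatives of $b$; checking that one can take both at the minimal level, so that $M$ can be chosen dimension-independent while all dimensional combinatorics are absorbed into $C_d$, is the only part of the argument that requires genuine care.
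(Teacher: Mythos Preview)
The paper does not actually prove this theorem: it is quoted as \cite[Theorem 4.23]{ev-zw} and used as a black box throughout. So there is no ``paper's own proof'' to compare against. Your sketch is the standard Calder\'on--Vaillancourt argument (dilation to $h=1$, phase-space partition of unity, Cotlar--Stein with off-diagonal decay from non-stationary phase), which is essentially the proof given in the cited reference \cite{ev-zw}; in that sense your approach coincides with what the paper implicitly relies on.
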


Another very basic estimate is what happens when we compose operators associated with $S(1)$ symbols (\cite[Theorem 4.18]{ev-zw}):

\begin{thm}
\label{compo_S1}
Let $a,b \in S(1_{\R^{2d}})$. Then $a^w(x,hD)b^w(x,hD) = c^w(x,hD)$, for some $c \in S(1)$ such that $c=ab+O_{S(1)}(h)$, and the estimates are explicit. In particular, $[a^w(x,hD),b^w(x,hD)] = O_{L^2 \rightarrow L^2}(h)$.
\end{thm}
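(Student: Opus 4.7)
The plan is to realize $a^w(x,hD)b^w(x,hD)$ as a Weyl quantization by computing its Schwartz kernel and reading off the symbol. Recall the kernel of $a^w(x,hD)$ is
$$K_a(x,y) = (2\pi h)^{-d}\int_{\R^d} e^{i\langle x-y,\xi\rangle/h}\,a\!\left(\tfrac{x+y}{2},\xi\right) d\xi.$$
Composing with the analogous kernel for $b$ and changing variables to put the resulting operator in Weyl form yields the oscillatory integral representation of the symbol
$$c(x,\xi) = (\pi h)^{-2d}\iint_{\R^{2d}\times\R^{2d}} e^{-2i\sigma(w_1,w_2)/h}\,a\bigl((x,\xi)+w_1\bigr)\,b\bigl((x,\xi)+w_2\bigr)\,dw_1\,dw_2,$$
where $\sigma$ is the standard symplectic form. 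This is the Moyal product $a\# b$.

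Next, I would expand this oscillatory integral asymptotically in $h$. Formally, $c = \exp\!\bigl(\tfrac{ih}{2}\sigma(D_{w_1},D_{w_2})\bigr)(a\otimes b)\big|_{w_1=w_2=0}$; the $k=0$ term in the Taylor expansion of the exponential yields $ab$, the $k=1$ term yields $\tfrac{h}{2i}\{a,b\}$, and so on. Thus at leading order $c = ab + O(h)$, and after one more step $c = ab + \tfrac{h}{2i}\{a,b\} + O(h^2)$, which after subtracting the analogous expansion of $b\# a$ gives $a\# b - b\# a = \tfrac{h}{i}\{a,b\} + O(h^2)$.

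The real work is showing the remainders lie in $S(1)$ with controllable seminorms, not merely pointwise. For this I would write the remainder as a Taylor integral of the exponential and estimate it by a non-stationary-phase argument: integrate by parts in $w_1,w_2$ against the quadratic phase $\sigma(w_1,w_2)/h$ to gain arbitrary decay, so that differentiating in $(x,\xi)$ only transfers derivatives onto $a$ and $b$ (which remain in $S(1)$). One has to keep careful track of how many integrations by parts are needed for absolute convergence, which is where the dependence on dimension enters; the resulting bounds express each seminorm of the remainder as a finite sum of products of seminorms of $a$ and $b$, which is the "explicit estimates" claim.

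Finally, the commutator statement $[a^w,b^w] = O_{L^2\to L^2}(h)$ is immediate: the difference $a\#b - b\# a$ is $h$ times an element of $S(1)$ whose seminorms are controlled by those of $a$ and $b$, so applying Theorem~\ref{bounded} converts the symbolic estimate into an operator norm estimate. The main obstacle throughout is not the formal expansion but the bookkeeping in the oscillatory-integral estimate of the remainder, since the Moyal kernel is not compactly supported and one must justify the asymptotic expansion by genuine stationary-phase-type bounds rather than by smooth cutoff arguments.
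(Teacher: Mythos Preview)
Your outline is essentially the standard proof of the Moyal product expansion and is correct in spirit. However, the paper does not actually prove this statement: it is quoted as \cite[Theorem 4.18]{ev-zw} and used as a black box, so there is no ``paper's own proof'' to compare against beyond the reference. What you have sketched is precisely the argument one finds in that reference, so in that sense your approach and the cited one coincide.
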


\subsection{Microlocalisation near the sphere}
Let $d \geq 1$ be an integer.
 
\begin{prop}
\label{micro}
Let $h_n$ be a sequence going to $0$, with $$h_n = \frac{1}{2n+d}+o(n^{-1}).$$ Let, for each $n \geq 0$, $g_n \in \mathscr{S}(\R^d)$ be such that $$(-h_n^2\Delta+|x|^2)g_n=\alpha_ng_n,$$ where $\alpha_n \longrightarrow 1$, and $\|g_n\|_{L^2} = 1$. Let $a \in S(1)$ be zero near $\{|x|^2+|\xi|^2=1\}$. Then, explicitly, $\|a^w(x,h_nD)g_n\|_{L^2}=o(1)$. 
\end{prop}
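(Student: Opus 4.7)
The plan is to exploit the fact that $g_n$ is an eigenfunction of $P_n := -h_n^2\Delta + |x|^2$ with eigenvalue $\alpha_n \to 1$, so that $g_n$ is microlocalized to the level set $\{p=1\}$ where $p(x,\xi) = |x|^2+|\xi|^2$. Since $a$ vanishes near this set, composing $a^w$ with a spectral cutoff of $P_n$ localized near $1$ should produce a pseudodifferential operator whose principal symbol is identically zero, hence an operator of norm $O(h_n)$; because the spectral cutoff acts as the identity on $g_n$, this will give the claim.

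Concretely, since $a \in S(1)$ vanishes on an open neighborhood of $\{p=1\}$, I would fix $\epsilon > 0$ with $a \equiv 0$ on $\{|p-1| < 2\epsilon\}$ and pick $\chi \in C_c^\infty(\R)$ with $\mathrm{supp}\,\chi \subset (1-\epsilon, 1+\epsilon)$ and $\chi \equiv 1$ on $[1-\epsilon/2, 1+\epsilon/2]$. By construction $a(x,\xi)\,\chi(p(x,\xi)) \equiv 0$ on $\R^{2d}$, and since $\alpha_n \to 1$, the spectral theorem applied to $P_n$ and its eigenfunction $g_n$ gives $\chi(P_n)g_n = \chi(\alpha_n)g_n = g_n$ for all sufficiently large $n$. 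It then suffices to estimate $\|a^w(x,h_nD)\chi(P_n)\|_{L^2 \to L^2}$.

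The remaining analytic ingredient is the functional calculus for semiclassical pseudodifferential operators, classically obtained via the Helffer--Sjöstrand formula: for $\chi \in C_c^\infty(\R)$ and $P_n$ the Weyl quantization of $p \in S(\langle(x,\xi)\rangle^2)$, one has $\chi(P_n) = (\chi\circ p)^w(x,h_nD) + h_n R_n$, with the symbol of $R_n$ in a bounded subset of $S(1)$ (so that $R_n$ is uniformly bounded on $L^2$ by Theorem \ref{bounded}). Note that $\chi\circ p \in C_c^\infty(\R^{2d}) \subset S(1)$ because $p$ is proper; hence Theorem \ref{compo_S1} yields $a^w (\chi\circ p)^w = (a\cdot(\chi\circ p))^w + O_{L^2\to L^2}(h_n) = O_{L^2\to L^2}(h_n)$, and combining gives $\|a^w g_n\|_{L^2} = \|a^w\chi(P_n)g_n\|_{L^2} = O(h_n)$, which is an explicit rate. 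The main obstacle is the functional calculus step itself, which lies slightly outside the two results recalled in Section 1.1 and needs to be imported from the general theory of semiclassical operators with unbounded symbols (or, for the harmonic oscillator specifically, derived by hand using the explicit Hermite spectral decomposition).
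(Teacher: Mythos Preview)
Your argument is correct, but it takes a different route from the paper. You rely on the Helffer--Sj\"ostrand functional calculus to realize $\chi(P_n)$ as a semiclassical pseudodifferential operator with principal symbol $\chi\circ p$, and then use disjoint supports. The paper instead stays entirely within the elementary symbolic calculus (Theorems~\ref{bounded} and~\ref{compo_S1}): it builds the \emph{elliptic} symbol $p_1 = p - 1 + i\chi$ (which satisfies $|p_1|\gtrsim 1+|x|^2+|\xi|^2$ because $\chi=1$ where $p-1$ is small), takes $p_2 = p_1^{-1}\in S((1+|x|^2+|\xi|^2)^{-1})$, and uses the parametrix identity $p_2^w p_1^w = I + O_{L^2\to L^2}(h_n)$. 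Since $p_1^w g_n = (\alpha_n-1)g_n + i\chi^w g_n$, one gets $a^w g_n = i\,a^w p_2^w \chi^w g_n + (\alpha_n-1)a^w p_2^w g_n + o_{L^2}(1)$, and the first term is $O(h_n)$ because $a\chi\equiv 0$.

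What each buys: the paper's parametrix trick is self-contained---it needs nothing beyond composition of symbols---whereas your approach imports a heavier theorem (which you correctly flag as the main obstacle). On the other hand, your route is conceptually more transparent (the spectral projector is the ``right'' object) and yields the sharper bound $O(h_n)$ as soon as $|\alpha_n-1|<\epsilon/2$, while the paper's bound is $o(1)$ limited by the rate at which $\alpha_n\to 1$.
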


\begin{proof}
There exists some $0 < r < 1$ such that $a = 0$ on $\{1-r < |x|^2+|\xi|^2 < 1+r \}$. Let $\chi_0$ be a smooth compactly supported function from $\R$ to $\R$ such that $\chi_0$ is supported inside $(-r,r)$ and is $1$ near $0$. Let $\chi(x,\xi) = \chi_0(|x|^2+|\xi|^2-1)$, so that $a$ and $\chi$ have disjoint supports. 

Let $p_1(x,\xi) = |x|^2+|\xi|^2-1+i\chi(x,\xi)$.
Notice that $p_1^w(x,h_nD)g_n = i\chi^w(x,h_nD)g_n+\beta_ng_n$, where $\beta_n=o(1)$ is a real number.

Since $|p_1(x,\xi)| \geq c(x^2+\xi^2+1)$ for some small $c > 0$, $p_2 := p_1^{-1}$ is a $S((x^2+\xi^2+1)^{-1})$ symbol. Thus $(p_2^w(x,h_nD)p_1^w(x,h_nD)-1)g_n = o_{L^2}(1)$. So $$a^w(x,h_nD)g_n = ia^w(x,h_nD)p_2^w(x,h_nD)\chi^w(x,h_nD)g_n + \beta_na^w(x,h_nD)p_2^w(x,h_nD)g_n + o_{L^2}(1),$$ therefore (the $a^w(x,hD),p_2^w(x,hD)$ are explicitly bounded as $h$ goes to $0$, see \ref{bounded}) $$a^w(x,h_nD)g_n = ia^w(x,h_nD)p_2^w(x,h_nD)\chi^w(x,h_nD)g_n+o_{L^2}(1).$$
From \ref{compo_S1}, and \ref{bounded}, $$a^w(x,hD)p_2^w(x,hD)\chi^w(x,hD)=(ap_2\chi+O_{S(1)}(h))^w(x,hD) = O_{L^2\rightarrow L^2}(h).$$
\end{proof}

\subsection{Possible weak limits}
The following is almost addressed in \cite[Sections 5.1, 5.2]{ev-zw}, but we need one very minor modification. 

\begin{prop}
\label{converse}
Let $h_n = \frac{1}{2n+d}+o(n^{-1})$, $n \in N$, where $N$ is an infinite subset of $\N$. Let, for each $n \in N$, be $g_n \in \mathscr{S}(\R^d)$ with unit $L^2$ norm such that $(-h_n^2\Delta + |x|^2)g_n = \alpha_ng_n$, and $\alpha_n=1+o(1)$. Assume that for every $a \in \mathscr{C}^{\infty}_c(\R^{2d})$, the sequence $\langle a^w(x,h_nD)g_n,\,g_n\rangle$, $n \in N$, converges. Then there exists a probability measure $\mu$ on $S^{2d-1}$ invariant under the Hamiltonian flow of $p$ such that, as $n \in N$ goes to infinity, 
$$\forall a \in S(1),\, \langle a^w(x,h_nD)g_n,\,g_n\rangle \longrightarrow \int{a\,d\mu}.$$
\end{prop}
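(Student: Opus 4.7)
The plan is to extract a Wigner-type measure from the assumed convergence, then successively establish its support, its total mass, its extension to $S(1)$, and its flow invariance. Set $L(a) = \lim_{n \in N} \langle a^w(x,h_nD)g_n, g_n\rangle$ for $a \in \mathscr{C}^\infty_c(\R^{2d})$, which exists by assumption and is linear. For $a \geq 0$, the sharp Gårding inequality gives $\langle a^w g_n, g_n\rangle \geq -C h_n$, so $L(a) \geq 0$; applied to $\|a\|_\infty \pm \Real a \geq 0$, this yields $|L(a)| \leq \|a\|_\infty$. The Riesz representation theorem then produces a positive Radon measure $\mu$ on $\R^{2d}$ of total mass at most $1$ with $L(a) = \int a\, d\mu$ for all $a \in \mathscr{C}^\infty_c$.

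Next I localize $\mu$ on the sphere. If $a \in \mathscr{C}^\infty_c$ vanishes near $S^{2d-1}$, Proposition \ref{micro} gives $\|a^w g_n\|_{L^2} = o(1)$, hence $L(a) = 0$, so $\mathrm{supp}\,\mu \subset S^{2d-1}$. To see the mass is exactly $1$, choose $\chi \in \mathscr{C}^\infty_c(\R^{2d})$ equal to $1$ in a neighborhood of $S^{2d-1}$; then $1 - \chi \in S(1)$ vanishes near $S^{2d-1}$, so Proposition \ref{micro} yields $\langle \chi^w g_n, g_n\rangle \to 1$, i.e.\ $\int \chi\, d\mu = 1$. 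Since $\chi \equiv 1$ on $\mathrm{supp}\,\mu$, this forces $\mu(S^{2d-1}) = 1$.

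The extension of convergence from $\mathscr{C}^\infty_c$ to $S(1)$ is now immediate. Given $a \in S(1)$, decompose $a = \chi a + (1-\chi)a$: the first piece lies in $\mathscr{C}^\infty_c$ and the second in $S(1)$ vanishing near $S^{2d-1}$, so Proposition \ref{micro} and Theorem \ref{bounded} give $\langle ((1-\chi)a)^w g_n, g_n\rangle = o(1)$. Therefore $\langle a^w g_n, g_n\rangle \to \int \chi a\, d\mu = \int a\, d\mu$, the last equality because $\mu$ sits on $\{\chi = 1\}$.

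Finally, for flow invariance I apply the standard commutator trick. Let $a \in \mathscr{C}^\infty_c(\R^{2d})$ and $P_n = -h_n^2\Delta + |x|^2 - \alpha_n$, whose Weyl symbol is $p - \alpha_n$. Since $P_n g_n = 0$, we have $\langle [a^w(x,h_nD), P_n] g_n, g_n\rangle = 0$. The Moyal expansion (cf.\ Theorem \ref{compo_S1}, localized to a compact set using that $a$ is compactly supported so that $\{a,p\}=H_p a$ also is) gives
\[
[a^w(x,h_nD), P_n] = \frac{h_n}{i}(H_p a)^w(x,h_nD) + O_{L^2 \to L^2}(h_n^2).
\]
Dividing by $h_n/i$ and passing to the limit yields $\int H_p a\, d\mu = 0$ for every $a \in \mathscr{C}^\infty_c$, which is precisely invariance of $\mu$ under the Hamiltonian flow of $p$. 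The only genuine subtlety is that $p \notin S(1)$, so the Moyal expansion must be justified by cutting off $p$ on the support of $a$ (where it equals a bounded symbol) before applying Theorem \ref{compo_S1}; this is the "very minor modification" alluded to before the statement.
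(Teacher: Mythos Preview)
Your proof is correct and follows the same route as the paper: G\aa rding plus Riesz to produce $\mu$, Proposition~\ref{micro} for the support and total mass, and the exact vanishing of $\langle [P_n,a^w]g_n,g_n\rangle$ for invariance; you are in fact more careful than the paper in spelling out the extension from $\mathscr{C}^\infty_c$ to $S(1)$.

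One small misreading worth flagging: the ``very minor modification'' the paper alludes to is \emph{not} that $p\notin S(1)$, but rather that $\alpha_n-1$ is only $o(1)$ instead of the $o(h_n)$ required in the hypothesis of \cite[Theorem~5.4]{ev-zw}; the paper resolves this exactly as you do, by noting that $g_n$ is a genuine eigenfunction (with eigenvalue $\alpha_n$), so the commutator expectation is identically zero rather than merely $o(h_n)$. Your separate worry about $p\notin S(1)$ is legitimate, but the cut-off argument you sketch is delicate (the discarded piece $p-\tilde p$ is unbounded, so Theorem~\ref{compo_S1} does not directly control $[a^w,(p-\tilde p)^w]$); the clean fix is simply that $p$ is a quadratic polynomial, so the Moyal expansion terminates and $[p^w,a^w]=\tfrac{h}{i}\{p,a\}^w$ holds exactly, with no remainder.
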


\begin{proof}
From \ref{bounded} we know that the function $\left(\mathscr{C}^{\infty}_c(\R^{2d}),\|\|_{\infty}\right) \longrightarrow \C$ that maps $a$ to $$\lim \,\langle a^w(x,h_nD)g_n,\,g_n\rangle$$ is a non-negative (because of the sharp G\aa rding inequality, see \cite[Theorem 4.32]{ev-zw}) bounded linear form. Thus, from the Riesz representation theorem, it is $$a \longmapsto \int_{\R^{2d}}{a\,d\mu}$$ for some finite non-negative Radon measure $\mu$ on $\R^{2d}$. Applying \ref{micro} to the $g_n$, we deduce that $\mu$ is supported on $S^{2d-1}$. \\Moreover, let $a \in \mathscr{C}^{\infty}_c(\R^{2d})$ be $1$ near $S^{2d-1}$; we know from \ref{micro} that $(1-a)^w(x,h_nD)g_n = o_{L^2}(1)$, hence $\mu\left(S^{2d-1}\right) = \int{a\,d\mu} = \langle a^w(x,h_nD)g_n,\,g_n\rangle +o(1) = \langle g_n,g_n\rangle + o(1) = 1+o(1)$. So $\mu$ is a probability measure on $S^{2d-1}$. \\
Considering the invariance part, we proceed as in the proof of \cite[Theorem 5.4]{ev-zw}, except that the remainder is not $o(h_n)$. But looking at that proof, it turns out that $o(1)$ is enough provided that we can prove that for any symbol $a \in \mathscr{C}^{\infty}_c(\R^{2d})$, $$\langle [p^w(x,h_nD),a^w(x,h_nD)]g_n,\,g_n \rangle = o(h_n).$$ In this case, 
\begin{align*}
\langle [p^w(x,h_nD),a^w(x,h_nD)]g_n,\,g_n \rangle &= \langle p^w(x,h_nD)a^w(x,h_nD)g_n,\,g_n\rangle - \alpha_n\langle a^w(x,h_nD)g_n,\,g_n\rangle \\
&= \langle a^w(x,h_nD)g_n,\,p^w(x,h_nD)g_n\rangle - \alpha_n\langle a^w(x,h_nD)g_n,\,g_n\rangle\\
&=0,
\end{align*} 
which ends the proof.
\end{proof}

\section{First step}

\subsection{Description of the one-variable eigenfunctions}
~\\
Let $h > 0$, we consider the operator $P = -h^2\Delta + x^2$ (known as the quantum harmonic oscillator), defined over $\mathscr{S}(\R) \subset L^2(\R)$. Let us define the annihilation $A = h\partial + x$ and creation operators $A^*=-h\partial + x$, such that $P = AA^*+h = A^*A-h$. Let $u_0^h(x) = \exp(-\frac{x^2}{2h})$, and $u_n^h = \left(A^*\right)^n(u_0^h) \in \mathscr{S}(\R)$. \\
\smallskip
\noindent
An easy induction proves that $u_n^h$ is an eigenfunction of $P$ with eigenvalue $(2n+1)h$, and can be written as a polynomial with real coefficients of degree $n$ times $u_0^h$; it is known that (see, for instance \cite[Chapter 6]{ev-zw}) that the $u^n_h$ span a dense vector space. Since $P$ is self-adjoint, we have therefore found all its eigenfunctions. 

\subsection{One-dimension theorem}

Let, for each $n \geq 0, h > 0$, $v_n^h = \|u_n^h\|_{L^2}^{-1}u_n^h$. 
The following is proved using \ref{converse}.
\begin{prop}
Let $a \in \mathscr{C}^{\infty}_c(\R^2)$. Let $h_n = \frac{1}{2n+1}+o(n^{-1})$. Then $$\langle a^w(x,hD)v_n^{h_n},\,v_n^{h_n}\rangle \longrightarrow \frac{1}{2\pi}\int_{x^2+\xi^2=1}{a\,ds}.$$
\end{prop}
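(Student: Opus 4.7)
The plan is to leverage Proposition \ref{converse} together with a uniqueness argument: for $d=1$ the unit sphere $S^1 \subset \R^2$ carries a unique probability measure invariant under the Hamiltonian flow of $p = x^2+\xi^2$, so any subsequential Wigner limit must be the uniform measure.

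First, I verify that $v_n^{h_n}$ fits the framework of Proposition \ref{converse}: by construction $(-h_n^2\Delta + x^2)v_n^{h_n} = (2n+1)h_n v_n^{h_n}$, and since $h_n = \frac{1}{2n+1} + o(n^{-1})$ we have $(2n+1)h_n = 1 + o(1)$, so the eigenvalues tend to $1$. Next, I want to apply Proposition \ref{converse}, but that proposition requires convergence of $\langle a^w(x,h_nD)v_n^{h_n},\,v_n^{h_n}\rangle$ for every $a \in \mathscr{C}^{\infty}_c(\R^2)$, while a priori we only have boundedness (by Theorem \ref{bounded}). So I first run a diagonal extraction: pick a countable family $(a_k)$ dense (in a suitable Schwartz seminorm) inside $\mathscr{C}^{\infty}_c(\R^2)$, extract a subsequence along which the scalar products converge for every $a_k$, and use the uniform operator bound from Theorem \ref{bounded} to propagate convergence to all $a \in \mathscr{C}^{\infty}_c(\R^2)$.

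Along such a subsequence Proposition \ref{converse} produces a probability measure $\mu$ on $S^1$ invariant under the Hamiltonian flow of $p$. From the explicit parametrization recalled in the introduction, this flow acts on $S^1$ as a $1$-parameter group of rotations (namely $(x,\xi) \mapsto (\cos(2t)x + \sin(2t)\xi, \cos(2t)\xi - \sin(2t)x)$). The only rotation-invariant probability measure on $S^1$ is the uniform measure $\frac{1}{2\pi}\,ds$ (e.g.\ by Fourier analysis on the circle, or by the standard Haar uniqueness argument). Hence $\mu = \frac{1}{2\pi}\,ds$ independently of which subsequence we extracted.

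Since every subsequence of $\langle a^w(x,h_nD)v_n^{h_n},\,v_n^{h_n}\rangle$ admits a further subsequence converging to $\frac{1}{2\pi}\int_{S^1} a\,ds$, the full sequence must converge to the same limit, giving the claim. The only genuinely delicate point is the extraction-and-uniformization step that upgrades subsequential boundedness to convergence for every test symbol; once that is in place, everything else is either recorded in the previous propositions or follows from the trivial uniqueness of the invariant measure on $S^1$ under rotation. No calculation with Hermite functions is needed beyond what was already used to set up $v_n^h$.
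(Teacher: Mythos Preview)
Your proof is correct and follows essentially the same route as the paper: bound the sequence via Theorem~\ref{bounded}, pass to a subsequence along which all the pairings $\langle b^w(x,h_nD)v_n^{h_n},v_n^{h_n}\rangle$ converge, invoke Proposition~\ref{converse} to identify the limit as an invariant probability measure on $S^1$, and conclude by uniqueness of such a measure. The only cosmetic difference is that the paper outsources the extraction step to \cite[Theorem~5.2]{ev-zw} (which already packages the existence of a semiclassical defect measure along a subsequence), whereas you do the diagonal extraction by hand against a countable dense family; both lead to the same place.
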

\begin{proof}
From \ref{bounded}, we know that the sequence $\langle a^w(x,hD)v_n^{h_n},\,v_n^{h_n}\rangle$ is bounded. So we just have to prove that any limit point of it is $$\frac{1}{2\pi}\int_{x^2+\xi^2=1}{a\,ds}.$$
So taking subsequences, assume $\langle a^w(x,hD)v_{n_k}^{h_{n_k}},\,v_{n_k}^{h_{n_k}}\rangle$ converges to $\ell$. From \cite[Theorem 5.2]{ev-zw} we can take a subsequence of the $n_k$ (which, for the sake of simplicity, we will also name $n_k$), such that there exists a non-negative Radon measure $\mu$ on $\R^2$, 
$$\forall b \in \mathscr{C}^{\infty}_c,\, \langle b^w(x,h_{n_k}D)v_{n_k}^{h_{n_k}},\,v_{n_k}^{h_{n_k}}\rangle \underset{k\rightarrow\infty}{\longrightarrow} \int{b\,d\mu}, $$ and we also have $$ \int{a\,d\mu} = \ell.$$
From \ref{converse}, $\mu$ is a probability measure on $S^1$ that is invariant under the Hamilton flow of $x^2+\xi^2$: so it must be the uniform measure.
\end{proof}

\subsection{One specific orbit in dimension $d$}

To go from the one-dimension case to the higher-dimension, an easy way uses the following technical result, proved for instance in \cite[Theorem 39.2]{Trv} (with a slight modification in the argument to allow more factors in the tensor product). 

\begin{prop}
\label{dense}
Let $f \in \mathscr{C}^{\infty}_c(\R^d)$. There exists a sequence $g_n$ of $\mathscr{C}^{\infty}_c(\R^d)$ functions that are linear combinations of tensor products of $\mathscr{C}^{\infty}_c(\R)$ functions such that $g_n \underset{\mathscr{C}^{\infty}_c(\R^d)}{\longrightarrow} f$. 
\end{prop}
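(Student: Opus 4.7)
The goal is to approximate $f \in \mathscr{C}^\infty_c(\R^d)$, in the LF-topology of $\mathscr{C}^\infty_c(\R^d)$, by finite $\C$-linear combinations of products $\varphi_1(x_1) \cdots \varphi_d(x_d)$ with $\varphi_j \in \mathscr{C}^\infty_c(\R)$. Recall that this convergence means that all the approximants can be taken to have supports contained in a single fixed compact set, and that all partial derivatives converge uniformly.

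My plan is to use a truncated multi-dimensional Fourier series. First, pick $R > 0$ large enough that $\operatorname{supp}(f) \subset (-R,R)^d$, and fix a cutoff $\chi \in \mathscr{C}^\infty_c((-2R,2R))$ with $\chi = 1$ on $[-R,R]$. View $f$ as a $4R$-periodic smooth function in each variable (extending by $0$ inside $(-2R,2R)^d \setminus \operatorname{supp}(f)$ and periodically elsewhere); this extension is smooth on the torus $\T = (\R/4R\Z)^d$ since $f$ has compact support strictly inside the fundamental domain. Then $f$ admits a Fourier expansion
\[
f(x) = \sum_{k \in \Z^d} c_k\, e^{i \pi k \cdot x/(2R)} = \sum_{k \in \Z^d} c_k\, \prod_{j=1}^d e^{i\pi k_j x_j /(2R)},
\]
where the coefficients $c_k$ decay faster than any inverse polynomial in $|k|$ because $f$ is smooth on $\T^d$.

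Define the partial sums $S_N(f)(x) = \sum_{|k|_\infty \le N} c_k\, e^{i\pi k\cdot x/(2R)}$ and set
\[
g_N(x) = \chi(x_1)\cdots\chi(x_d)\, S_N(f)(x) = \sum_{|k|_\infty \le N} c_k \prod_{j=1}^d \bigl(\chi(x_j) e^{i\pi k_j x_j/(2R)}\bigr).
\]
Each $g_N$ is a finite linear combination of tensor products of $\mathscr{C}^\infty_c(\R)$ functions, and all $g_N$ are supported in the fixed compact set $[-2R,2R]^d$. The rapid decay of $(c_k)$ ensures that $S_N \to f$ in $\mathscr{C}^\infty(\T^d)$ and a fortiori uniformly with all derivatives on $[-2R,2R]^d$; multiplying by the fixed smooth compactly supported function $\chi(x_1)\cdots\chi(x_d)$ preserves this convergence. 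Since $\chi(x_1)\cdots\chi(x_d) f(x) = f(x)$ (because $\chi \equiv 1$ on $[-R,R]$ and $f$ vanishes outside $(-R,R)^d$), we conclude $g_N \to f$ in $\mathscr{C}^\infty_c(\R^d)$.

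The only slightly delicate point, which I would expect to be the main thing to check carefully, is the $\mathscr{C}^\infty$-convergence of the Fourier partial sums on the torus; but this is standard and follows from the Schwartz-type decay of $c_k$, since for any multi-index $\alpha$, $\partial^\alpha S_N(f) \to \partial^\alpha f$ uniformly on $\T^d$ by dominated convergence on the absolutely convergent series $\sum_k (i\pi k/(2R))^\alpha c_k e^{i\pi k \cdot x/(2R)}$. The truncation by $\chi^{\otimes d}$ then transports this convergence from $\T^d$ to $\mathscr{C}^\infty_c(\R^d)$ with a uniform compact support, which is exactly what Proposition \ref{dense} requires.
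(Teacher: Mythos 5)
Your Fourier-series argument is correct and complete, but note that the paper does not actually prove Proposition~\ref{dense}: it simply cites Tr\`eves \cite[Theorem 39.2]{Trv} (density of $\mathscr{D}(X)\otimes\mathscr{D}(Y)$ in $\mathscr{D}(X\times Y)$), remarking that a slight modification extends the statement to $d$ factors. So you have supplied a self-contained proof where the paper relies on a black box. The approach you take --- periodize on a torus containing the support of $f$, exploit the rapid decay of Fourier coefficients of a smooth function to get $\mathscr{C}^\infty$-convergence of the square partial sums, then cut off by a fixed $\chi^{\otimes d}$ to return to compactly supported functions of product type with uniformly bounded supports --- is one of the classical routes to this density result, and it has the pleasant feature of being entirely elementary (no partition of unity, no regularization by tensor-product mollifiers, no appeal to the theory of nuclear spaces). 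The cited Tr\`eves theorem is more general (it handles open sets $X,Y$ rather than $\R^d$, and is part of a broader framework leading to the Schwartz kernel theorem), so the paper's citation buys generality and brevity, while your argument buys transparency and makes the iteration over $d$ factors explicit rather than a ``slight modification.'' The one point worth keeping front and center, which you do address correctly, is that convergence in $\mathscr{C}^\infty_c(\R^d)$ requires both uniform $\mathscr{C}^k$-convergence for all $k$ \emph{and} a fixed compact set containing all supports; your cutoff $\chi^{\otimes d}$ handles the latter, and the identity $\chi^{\otimes d}f=f$ justifies that the limit is indeed $f$ and not some periodization artifact.
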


Let, here, $d \geq 2$ be an integer, $h_n = \frac{1}{2n+d} = \frac{1}{2n+1} + o\left(\frac{1}{n}\right)$, and $f_n = v^{h_n}_n \otimes v^{h_n}_0 \otimes \ldots \otimes v^{h_n}_0$, so that $f_n$ is a $d$-variable Schwartz function with unit $L^2$ norm, and $(-h_n^2\Delta + |x|^2)f_n = f_n$. 

The following lemma is exactly the first example following \cite[Theorem 5.2]{ev-zw}, for the coherent state $(0,0)$. 

\begin{lem}
Let $a \in \mathscr{C}^{\infty}_c(\R^2)$, then, as $n \longrightarrow \infty$, 
$$\langle a^w(x,h_nD)v^{h_n}_0,\,v^{h_n}_0\rangle \longrightarrow a(0,0)$$
\end{lem}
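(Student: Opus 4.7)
The plan is to compute the matrix element $\langle a^w(x,h_nD)v^{h_n}_0, v^{h_n}_0\rangle$ essentially by hand, exploiting the fact that $v_0^h$ is an explicit normalized Gaussian. Setting $v_0^h(x) = (\pi h)^{-1/4} e^{-x^2/(2h)}$, I would write the Wigner function $W_h v_0^h(x,\xi) = \frac{1}{2\pi h}\int e^{-iy\xi/h} v_0^h(x+y/2)\overline{v_0^h(x-y/2)}\, dy$, use the identity $(x+y/2)^2+(x-y/2)^2 = 2x^2 + y^2/2$, and then reduce the computation to a one-variable Gaussian integral in $y$.

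The key step is the Fresnel-type calculation
\[
\int_{\R} e^{-iy\xi/h - y^2/(4h)}\, dy = 2\sqrt{\pi h}\, e^{-\xi^2/h},
\]
obtained by completing the square. Plugging back in yields the clean formula
\[
W_h v_0^h(x,\xi) = \frac{1}{\pi h}\, e^{-(x^2+\xi^2)/h},
\]
so that by the standard identity $\langle a^w(x,hD)u, u\rangle = \int a\,W_h u\,dx\,d\xi$ for $u \in \mathscr{S}$,
\[
\langle a^w(x,h_nD)v_0^{h_n}, v_0^{h_n}\rangle = \frac{1}{\pi h_n} \int_{\R^2} a(x,\xi)\, e^{-(x^2+\xi^2)/h_n}\, dx\, d\xi.
\]

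From there the conclusion is immediate: the family $\frac{1}{\pi h} e^{-(x^2+\xi^2)/h}$ is a standard Gaussian approximate identity at the origin (its integral equals $1$), so for $a \in \mathscr{C}^{\infty}_c(\R^2)$ continuous at $(0,0)$, the right-hand side converges to $a(0,0)$ as $n\to \infty$. No genuine obstacle appears in this argument; the only step requiring care is the contour-shift/completion-of-square in the Fresnel integral, and keeping track of the normalization constants $(\pi h)^{-1/4}$ and $(2\pi h)^{-1}$ in the definitions of $v_0^h$ and of the Weyl quantization. Alternatively, one can cite the coherent-state computation from \cite[Thm. 5.2]{ev-zw} directly, which gives precisely the same Gaussian Wigner function for the ground state centered at $(0,0)$.
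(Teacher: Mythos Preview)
Your proposal is correct and essentially coincides with what the paper does: the paper simply cites the coherent-state example following \cite[Theorem~5.2]{ev-zw} for the state centered at $(0,0)$, and your argument is precisely the explicit Wigner-function computation underlying that example. There is nothing to add.
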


Now we may generalise:

\begin{prop}
\label{oneorbitdim}
Let $a \in \mathscr{C}^{\infty}_c(\R^{2d})$. Then, as $n \longrightarrow \infty$:
$$\langle a^w(x,h_nD)f_n,\,f_n \rangle \longrightarrow \frac{1}{2\pi}\int_{x_1^2+\xi_1^2=1}{a\,ds}.$$
\end{prop}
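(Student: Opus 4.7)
The plan is to first treat the case of a symbol that factorises as a tensor product in the $(x_i,\xi_i)$ variables, and then pass to a general $a\in\mathscr{C}^{\infty}_c(\R^{2d})$ by density together with the uniform operator bound from Theorem \ref{bounded}.

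For the factorised case, I would suppose $a(x,\xi) = \prod_{i=1}^d a_i(x_i,\xi_i)$ with each $a_i\in\mathscr{C}^{\infty}_c(\R^2)$. Using the Weyl quantisation formula and the identification $L^2(\R^d)\simeq L^2(\R)^{\otimes d}$, a direct computation on pure tensors shows that $a^w(x,hD) = a_1^w(x_1,hD_1)\otimes\cdots\otimes a_d^w(x_d,hD_d)$. Since $f_n=v_n^{h_n}\otimes v_0^{h_n}\otimes\cdots\otimes v_0^{h_n}$ is itself a pure tensor, the inner product factorises:
$$\langle a^w(x,h_nD)f_n,f_n\rangle = \langle a_1^w(x_1,h_nD_1)v_n^{h_n},v_n^{h_n}\rangle \prod_{i=2}^d \langle a_i^w(x_i,h_nD_i)v_0^{h_n},v_0^{h_n}\rangle.$$
Because $h_n=\frac{1}{2n+d}=\frac{1}{2n+1}+o(n^{-1})$, the one-dimensional theorem proved just above yields $\langle a_1^w v_n^{h_n},v_n^{h_n}\rangle\to\frac{1}{2\pi}\int_{x_1^2+\xi_1^2=1}a_1\,ds$, while the coherent state lemma at $(0,0)$ gives $\langle a_i^w v_0^{h_n},v_0^{h_n}\rangle\to a_i(0,0)$ for each $i\geq 2$. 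Multiplying these limits yields $\frac{1}{2\pi}\int_{x_1^2+\xi_1^2=1}a(x_1,0,\dots,0,\xi_1,0,\dots,0)\,ds$, which is exactly the claimed integral along the embedded great circle.

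To pass to a general $a\in\mathscr{C}^{\infty}_c(\R^{2d})$, I would apply Proposition \ref{dense} (regrouping the $2d$ one-variable factors into the $d$ pairs $(x_i,\xi_i)$) to obtain a sequence $g_k\to a$ in $\mathscr{C}^{\infty}_c(\R^{2d})$ with each $g_k$ a finite linear combination of factorised symbols as above, with supports staying in a common compact set. Theorem \ref{bounded} then gives
$$\|(a-g_k)^w(x,h_nD)\|_{L^2\to L^2}\leq C\sum_{|\alpha|\leq 2Md}h_n^{|\alpha|/2}\|\partial_\alpha(a-g_k)\|_\infty \leq C\sum_{|\alpha|\leq 2Md}\|\partial_\alpha(a-g_k)\|_\infty$$
once $h_n\leq 1$, so $|\langle(a-g_k)^w f_n,f_n\rangle|$ can be made arbitrarily small uniformly in $n$, and a standard $\eps/3$ argument concludes. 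The main point requiring care is the tensor product factorisation of the Weyl operator on a pure-tensor input; once this is in hand, the $h^{|\alpha|/2}$ factors in Theorem \ref{bounded} are harmless in our regime, so the operator norm bound is uniform in $n$ and the density step is routine.
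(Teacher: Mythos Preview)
Your proof is correct and follows essentially the same route as the paper: reduce by density (via Proposition~\ref{dense} and the uniform $\mathscr{C}^{Md}_b$ operator bound of Theorem~\ref{bounded}) to symbols that factor across the pairs $(x_i,\xi_i)$, then use the tensor factorisation of the Weyl quantisation on the pure tensor $f_n$ together with the one-dimensional result and the coherent-state lemma. The only cosmetic difference is that the paper works directly with one-variable tensor factors $b_i(x_i)c_i(\xi_i)$ and regroups them as $(b_i\otimes c_i)(x_i,\xi_i)$, whereas you phrase the factorised case for general $a_i\in\mathscr{C}^\infty_c(\R^2)$; since Proposition~\ref{dense} only produces the former anyway, this changes nothing.
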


\begin{proof}
We know that for any $n$, from \cite[Theorem 4.23]{ev-zw}, for some universal constants $C,M \geq 1$, $$|\langle a^w(x,h_nD)f_n,\,f_n\rangle|\leq C\|a\|_{\mathscr{C}^{Md}_b}.$$
So it is enough to prove the convergence on some $\|\cdot\|_{\mathscr{C}^{Md}_b}$-dense subset of $\mathscr{C}^{\infty}_c$. Since the statement is linear in $a$, it is enough to prove it on a set of $a$ that spans a $\|\cdot\|_{\mathscr{C}^{Md}_b}$-dense subspace of $\mathscr{C}^{\infty}_c$. Using \ref{dense}, it is enough to prove it when $a(x,\xi) = b_1(x_1)c_1(\xi_1)\ldots b_d(x_d)c_d(\xi_d)$. 

We then have, using the previous lemma and section $1.1$:
\begin{align*}
\langle a^w(x,h_nD)f_n,\,f_n\rangle &= \langle (b_1 \otimes c_1)^w(x,h_nD)v_n^{h_n},\,v_n^{h_n}\rangle\prod_{i=2}^d{\langle(b_i\otimes c_i)^w(x,h_nD)v^{h_n}_0,\,v^{h_n}_0\rangle}\\
&\longrightarrow \frac{1}{2\pi}\left(\prod_{i=2}^d{(b_i\otimes c_i)(0,0)}\right)\int_{x^2+\xi^2=1}{(b_1\otimes c_1)(x,\xi)\,ds}\\
&= \frac{1}{2\pi}\int_{\substack{x_1^2+\xi_1^2=1\\|x|^2+|\xi|^2=1}}{a\,ds}
\end{align*}
\end{proof}

\section{Symplectic geometry on the sphere}

The purpose of this section is to show that any closed orbit under the flow of $H_p$ can be mapped to any other orbit by a symplectic orthogonal linear transformation. Quantizations of such transformations are possible (because they are linear symplectic) and map eigenfunctions to eigenfunctions, because they leave the operator $-h^2\Delta+|x|^2$ invariant (they are orthogonal).  

In $3.1$ and $3.2$, $n \geq 1$ is an integer. We denote, for every integer $k \geq 1$, 
 
\begin{itemize}
\item $S_k(\R)$ the set of all $k \times k$ symmetric matrices with real coefficients (real dimension is $k(k+1)/2$).
\item $AS_{k}(\R)$ the set of all $p \times p$ skew-symmetric matrices with real coefficients (real dimension is $k(k-1)/2$).
\item $\tttt{A}$ the transpose of $A$, where $A$ is a $k \times k$ matrix.
\item $\mathscr{M}_k(\R)$ (resp. $\mathscr{M}_k(\C)$) the set of all $k \times k$ matrices with real (resp. complex) coefficients (real [resp. complex] dimension is $k^2$).
\item $\mathscr{H}_k$ the set of all $k \times k$ Hermitian matrices (real dimension is $k^2$, it is not naturally a complex vector space). 
\item $\Sym{M} = M+\tttt{M}$ for every $k \times k$ matrix $M$.
\item $\Skew{M} = M-\tttt{M}$ for every $k \times k$ matrix $M$.
\end{itemize}

\subsection{Ortho-symplectic group}
~\\~\\
\begin{defi}
The ortho-symplectic group $\Gamma_n$ is the set of all $2n \times 2n$ matrices with real coefficients that are both orthogonal and symplectic. 
\end{defi}

We will need strong properties about $\Gamma_n$.

\begin{prop}
$\Gamma_n$ is a compact Lie subgroup of $GL_{2n}(\R)$. It has dimension $n^2$.
Moreover, $$\Gamma_n = \left\{\mathcal{M}_{A,B}:=\begin{bmatrix}A & B \\ -B & A\end{bmatrix}, A\tttt{A}+B\tttt{B} = I_n, A\tttt{B} \in S_n(\R)\right\},$$ and $$T_{\mathcal{M}_{A,B}}\Gamma_n = \left\{\mathcal{M}_{M,N},M,N \in \mathscr{M}_n(\R), A\tttt{N}-B\tttt{M} \in S_n(\R), A\tttt{M}+B\tttt{N} \in AS_n(\R)\right\}.$$
\end{prop}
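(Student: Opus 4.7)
The plan has two main ingredients: give a concrete description of $\Gamma_n$ as a level set of a smooth map, then read off the algebraic structure of the group.

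For the characterization, I would start by observing that a real $2n \times 2n$ matrix $M$ is both orthogonal and symplectic if and only if it is orthogonal and commutes with the standard symplectic form $J = \begin{bmatrix} 0 & I_n \\ -I_n & 0 \end{bmatrix}$: the symplectic relation $\tttt{M}JM = J$ combined with $\tttt{M} = M^{-1}$ gives $JM = MJ$, and conversely. In $n \times n$ blocks, the commutation $MJ = JM$ immediately forces the shape $M = \mathcal{M}_{A,B}$, and expanding $M\tttt{M} = I_{2n}$ in this block form yields exactly $A\tttt{A} + B\tttt{B} = I_n$ and $A\tttt{B} \in S_n(\R)$, which is the announced set description.

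For the rest, $\Gamma_n$ is a closed subgroup of $O(2n)$, hence compact, and by Cartan's closed subgroup theorem it is a Lie subgroup of $GL_{2n}(\R)$. To obtain the dimension and the tangent space in one stroke, I would consider the smooth map
$$F : \mathscr{M}_n(\R) \times \mathscr{M}_n(\R) \longrightarrow S_n(\R) \times AS_n(\R), \qquad F(A,B) = \bigl(A\tttt{A} + B\tttt{B} - I_n,\; A\tttt{B} - B\tttt{A}\bigr),$$
so that $\Gamma_n$ is identified with $F^{-1}(0,0)$ via $(A,B) \leftrightarrow \mathcal{M}_{A,B}$. A direct differentiation yields
$$dF(A,B)(M,N) = \bigl(\Sym(A\tttt{M} + B\tttt{N}),\; \Skew(A\tttt{N} - B\tttt{M})\bigr).$$
At $(A,B) = (I_n, 0)$ this is $(M + \tttt{M},\; \tttt{N} - N)$, which is visibly surjective onto $S_n(\R) \times AS_n(\R)$. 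Since $\Gamma_n$ is a group, left translation by any of its elements is a self-diffeomorphism of the ambient space that carries $I_{2n}$ to any chosen element, so being a submanifold near the identity is equivalent to being a submanifold everywhere, of the same dimension. The implicit function theorem then gives
$$\dim \Gamma_n = 2n^2 - \Bigl(\tfrac{n(n+1)}{2} + \tfrac{n(n-1)}{2}\Bigr) = n^2.$$
Surjectivity of $dF$ at an arbitrary point of $\Gamma_n$ now follows from the dimension count (the kernel must have dimension $n^2$, so the rank is $n^2 = \dim(S_n \times AS_n)$), and unpacking $\Ker dF(A,B)$ gives exactly the two conditions $A\tttt{M} + B\tttt{N} \in AS_n(\R)$ and $A\tttt{N} - B\tttt{M} \in S_n(\R)$ announced in the proposition.

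The computations are elementary linear algebra; the only place that requires care is transpose bookkeeping — the paper fixes the convention $M\tttt{M} = I$ over the equivalent $\tttt{M}M = I$, which affects which side the transposes land on. The one mildly conceptual point is that surjectivity of $dF$ at the identity does not \emph{directly} propagate to arbitrary points of $\Gamma_n$ (the translation diffeomorphism on $GL_{2n}$ does not commute with $F$ on the nose); I work around this by transferring only the manifold structure via the group action, and then recovering surjectivity everywhere from the dimension count. Alternatively, one could exhibit explicit preimages at a general $(A,B) \in \Gamma_n$ using unitarity of $A + iB$.
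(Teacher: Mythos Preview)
Your approach is close in spirit to the paper's: both realize $\Gamma_n$ as the level set of a smooth map and read off the tangent space from the kernel of the differential. The paper uses a map $f$ defined on all of $\mathscr{M}_{2n}(\R)$, encoding the block constraints $A=D$, $B=-C$ as two extra components of the target; you instead isolate the block form first via $MJ=JM$ and then work with the smaller map $F$ on $\mathscr{M}_n(\R)^2$. That is a clean simplification, and your derivation of the characterization and of $dF$ is correct.

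The one genuine issue is the ``dimension count'' step for surjectivity at a general point. From the group-translation argument you obtain that $\Gamma_n$ is an $n^2$-dimensional submanifold, hence $T_{(A,B)}\Gamma_n \subseteq \Ker dF(A,B)$ and $\dim \Ker dF(A,B) \geq n^2$. But equality does not follow from this alone: take $F(x,y)=x^3$ on $\R^2$, where $F^{-1}(0)=\{x=0\}$ is a $1$-manifold yet $\Ker dF=\R^2$ along it. So the manifold dimension by itself does not force $dF(A,B)$ to be surjective, and without that you cannot identify $\Ker dF(A,B)$ with the tangent space. Your stated alternative is exactly the right fix and is precisely what the paper does: given $(W,X)\in S_n(\R)\times AS_n(\R)$, solve
\[
\mathcal{M}_{A,B}\begin{bmatrix}\tttt{M}\\\tttt{N}\end{bmatrix}=\frac{1}{2}\begin{bmatrix}W\\X\end{bmatrix}
\]
using invertibility of $\mathcal{M}_{A,B}$ (equivalently, unitarity of $A+iB$), which yields $\Sym(A\tttt{M}+B\tttt{N})=W$ and $\Skew(A\tttt{N}-B\tttt{M})=X$. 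With that substitution in place of the dimension count, your proof is complete and essentially parallel to the paper's.
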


\begin{proof}
The compactness and group properties are consequences of the definition. The characterization is proved with easy calculations.  
Regarding the tangent space, let 
$$f: \begin{bmatrix}A && B \\ C && D\end{bmatrix} \in \mathscr{M}_{2n}(\R) \longmapsto (A-D,B+C,A\tttt{A}+B\tttt{B},A\tttt{B}-B\tttt{A}) \in \mathscr{M}_n(\R)^2 \times S_n(\R) \times AS_n(\R).$$

$f$ is smooth, and $\Gamma_n = f^{-1}(\left\{(O_n,O_n,I_n,O_n)\right\})$. To end the proof, it is enough to show that $f$ is a submersion at every point of $\Gamma_n$, that the kernel of $f'$ matches what we claimed was the tangent space at every point of $\Gamma_n$, and that dimensions match.

As far as dimensions are concerned, the domain is a $4n^2$-dimensional manifold, and we are looking at the pre-image of a point in a $3n^2$-dimension manifold, thus if $f$ is a submersion at each point of $\Gamma_n$, then $\Gamma_n$ has dimension $n^2$.  

Now, let $P = \begin{bmatrix}A & B\\-B & A\end{bmatrix} \in \Gamma_n$, let $U,V,W,X \in \mathscr{M}_{n}(\R)$. Note that 
\begin{align*}f'(P)\cdot\begin{bmatrix}U & V \\ W & X\end{bmatrix} &= (U - X, V + W, A\tttt{U}+U\tttt{A}+B\tttt{V}+V\tttt{B},\\& A\tttt{V}+U\tttt{B}-V\tttt{A}-B\tttt{U}) \\&= (U-X,V+W,\Sym(A\tttt{U}+B\tttt{V}),\Skew(A\tttt{V}-B\tttt{U})),\end{align*} therefore 

\begin{align*}
\begin{bmatrix}U & V \\ W & X\end{bmatrix} \in \Ker f'(P) &\Leftrightarrow U = X, V = -W, A\tttt{U}+U\tttt{A}+B\tttt{V}+V\tttt{B} = 0,\\& A\tttt{V}+U\tttt{B}-V\tttt{A}-B\tttt{U} = 0 \\
&\Leftrightarrow U = X, V = -W, A\tttt{U}+B\tttt{V} \in AS_n(\R), A\tttt{V}-B\tttt{U} \in S_n(\R)\\
\end{align*}

Since this matches the depiction of the claimed tangent space of $\Gamma_n$ at $P$, it is enough to prove that $f'(P)$ is surjective. Let now $(U', V', W', X') \in \mathscr{M}_n(\R)^2 \times S_n(\R) \times AS_n(\R)$. Let $U,V \in \mathscr{M}_n(\R)$ be such that $P\begin{bmatrix}\tttt{U} \\ \tttt{V}\end{bmatrix} = \frac{1}{2}\begin{bmatrix}W' \\ X'\end{bmatrix}$; it is possible because $P$ is nonsingular. Let $W = V'-V$, $X = U-U'$; one easily sees then that $f'(P)\cdot\begin{bmatrix}U & V \\ W & X\end{bmatrix} = (U',V',W',X')$.

\end{proof}

\subsection{Action of the ortho-symplectic group over the sphere}
~\\
The goal of the section is to prove the following proposition, which is one key element of the second step of the proof:

\begin{prop}
\label{ortho_symp_trans}
Let $\Gamma_n$ act on $S^{2n-1}$. This action is transitive. Also, $\Gamma_n$ acts on the orbits of $S^{2n-1}$ and this action is transitive.
\end{prop}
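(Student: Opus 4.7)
The plan is to identify $\Gamma_n$ with the unitary group $U(n)$ acting on $\C^n\cong\R^{2n}$, and to recognise the orbits of $H_p$ on $S^{2n-1}$ as the fibres of the Hopf fibration; both transitivity statements will then reduce to standard facts about $U(n)$.

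The first step is to verify the group isomorphism $\Gamma_n\simeq U(n)$. To $\mathcal{M}_{A,B}\in\Gamma_n$ I would associate $U=A+iB\in\mathscr{M}_n(\C)$. Using $A\tttt{A}+B\tttt{B}=I_n$ and $A\tttt{B}=B\tttt{A}$ (both read off from the characterisation in the previous proposition), the product
\[UU^{*}=(A+iB)(\tttt{A}-i\tttt{B})=A\tttt{A}+B\tttt{B}+i(B\tttt{A}-A\tttt{B})\]
collapses to $I_n$, so $U\in U(n)$; conversely every unitary $U$ splits uniquely as $A+iB$ with $A,B$ real and $\mathcal{M}_{A,B}\in\Gamma_n$. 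Identifying $\R^{2n}$ with $\C^n$ via $(x,\xi)\mapsto z=x-i\xi$, one checks
\[Uz=(A+iB)(x-i\xi)=(Ax+B\xi)-i(-Bx+A\xi),\]
so that the linear action of $\mathcal{M}_{A,B}$ on $\R^{2n}$ corresponds to left multiplication by $U$ on $\C^n$.

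Given this identification I would then deduce both claims from classical linear algebra. Transitivity of $\Gamma_n$ on $S^{2n-1}$ becomes transitivity of $U(n)$ on the Hermitian unit sphere of $\C^n$: for any unit vector $z\in\C^n$ one completes $z$ to a unitary basis of $\C^n$, and the resulting matrix $U$ satisfies $Ue_1=z$. For the second part: the trajectories $x(t)=(\cos 2t)x_0+(\sin 2t)\xi_0,\ \xi(t)=(\cos 2t)\xi_0-(\sin 2t)x_0$ recalled in the introduction become, in the complex picture, $z(t)=e^{2it}z_0$. Hence the orbits of $H_p$ on $S^{2n-1}$ are exactly the Hopf fibres $\{e^{i\theta}z_0:\theta\in\R\}$, and since left multiplication by any $U\in U(n)$ commutes with the scalar action of $e^{2it}$, the $\Gamma_n$-action descends to $\C P^{n-1}$, where transitivity holds by the same Gram--Schmidt argument.

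I do not foresee any serious obstacle beyond the bookkeeping needed to pass between the real orthogonal/symplectic structure on $\R^{2n}$ and the Hermitian structure on $\C^n$; once the isomorphism $\Gamma_n\simeq U(n)$ is correctly set up (in particular the sign convention in $z=x-i\xi$, chosen so that the Hamiltonian flow is a true rotation by $e^{2it}$ rather than its inverse), the two transitivity statements are essentially immediate.
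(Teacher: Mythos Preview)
Your proposal is correct and proceeds along a genuinely different route from the paper. You identify $\Gamma_n$ with $U(n)$ via $\mathcal{M}_{A,B}\mapsto A+iB$ and $\R^{2n}$ with $\C^n$ via $(x,\xi)\mapsto x-i\xi$, then invoke the classical transitivity of $U(n)$ on the Hermitian unit sphere and on $\C P^{n-1}$; the computations you sketch are accurate, and the sign convention does indeed make the flow act as $z\mapsto e^{2it}z$. The paper, by contrast, never names $U(n)$: it shows by a dimension count on $\Ker d$ (using the lemma on $K_u\subset\mathscr{H}_q$) that the orbit map $A_{x,y}:\Gamma_n\to S^{2n-1}$ is a submersion at the identity, propagates this to every point by right translation, and concludes transitivity from the image being open (submersion), closed (compactness of $\Gamma_n$), and nonempty in the connected sphere; the statement about orbits is then obtained by observing that $\Gamma_n$ commutes with $J$ and hence with $\exp(2tJ)$. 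Your argument is shorter, more conceptual, and makes transparent why $\Gamma_n$ has dimension $n^2$; the paper's argument is more self-contained in that it stays within the real description of $\Gamma_n$ set up in the preceding proposition and avoids importing facts about $U(n)$, at the cost of the auxiliary Hermitian-kernel lemma and the submersion machinery.
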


We first need a linear algebra lemma. 

\begin{lem}
Let $u \in \C^q$ be a non-zero vector, $q \geq 2$. Then $K_u = \left\{M \in \mathscr{H}_q,Mu = 0\right\}$ is a real vector space of dimension $(q-1)^2$.
\end{lem}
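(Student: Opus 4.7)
The plan is to reduce to the model case $u=e_1$ via a unitary change of basis, and then to compute $K_{e_1}$ directly. The condition $Mu=0$ is real-linear in $M$, so $K_u$ is automatically a real subspace of $\mathcal{H}_q$; only the dimension count requires work.

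First I would observe that $K_u = K_{\lambda u}$ for every nonzero $\lambda \in \C$, so we may assume $\|u\|=1$. Next, pick any unitary $U \in U(q)$ with $u = U e_1$. The map $\Phi_U : M \mapsto U^* M U$ is a real-linear automorphism of $\mathcal{H}_q$, and for $M \in \mathcal{H}_q$ we have
\[
\Phi_U(M) \cdot e_1 = U^* M U e_1 = U^*(Mu),
\]
so $\Phi_U(M) \in K_{e_1}$ if and only if $M \in K_u$ (using that $U^*$ is invertible). Hence $\Phi_U$ restricts to a real-linear isomorphism $K_u \xrightarrow{\sim} K_{e_1}$, and it suffices to compute $\dim_{\R} K_{e_1}$.

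Finally, for a Hermitian matrix $M = (m_{ij})$, the condition $Me_1 = 0$ means that the first column of $M$ is zero, i.e. $m_{i1}=0$ for all $i$. By Hermitian symmetry $m_{1j} = \overline{m_{j1}} = 0$ as well, so $M$ has the block form
\[
M = \begin{bmatrix} 0 & 0 \\ 0 & M' \end{bmatrix}, \qquad M' \in \mathcal{H}_{q-1}.
\]
Conversely any such matrix is in $K_{e_1}$, so $K_{e_1}$ is canonically isomorphic (as a real vector space) to $\mathcal{H}_{q-1}$, which has real dimension $(q-1)^2$. There is no genuine obstacle in this argument; the only substantive step is recognizing that unitary conjugation preserves both $\mathcal{H}_q$ and the kernel condition, reducing the lemma to a trivial block decomposition.
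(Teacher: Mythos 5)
Your proof is correct, and it takes a genuinely different route from the paper's. The paper fixes a coordinate with $u_q \neq 0$ (after relabeling) and considers the restriction map $\Phi\colon K_u \to \mathscr{H}_{q-1}$ sending $M$ to its upper-left $(q-1)\times(q-1)$ block; it then asserts that $\Phi$ is a bijection with an explicit inverse, which amounts to solving the kernel equation $Mu=0$ together with Hermitian symmetry to recover the last row, column, and corner entry from $M'$ and $u$. That works but requires a small computation (including verifying that the reconstructed $M_{qq}$ is real). You instead normalize $u$ to $e_1$ by a unitary conjugation $M \mapsto U^*MU$, which is a real-linear automorphism of $\mathscr{H}_q$ carrying $K_u$ onto $K_{e_1}$, and then the block structure of $K_{e_1}$ is immediate. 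Your approach buys a cleaner reduction and avoids the reconstruction/reality check implicit in the paper's "explicit inverse map," at the cost of invoking the (standard) fact that any unit vector extends to an orthonormal basis; the paper's approach is more elementary in the sense of not needing unitary invariance but leaves more to the reader.
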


\begin{proof}
Assume, without loss of generality, that $u_q \neq 0$. Let $\Phi(M \in K_u) = \left([M]_{i,j}\right)_{1 \leq i,j < q} \in \mathscr{H}_{q-1}$, $\Phi$ is a linear operator. It is easily checked to be injective and surjective, with an explicit inverse map. 
\end{proof}

\begin{cor}
Let, for any $(x,y) \in S^{2n-1}$, $A_{x,y}$ be the map \begin{align*}A_{x,y}: g \in \Gamma_n \longmapsto g\cdot (x,y) \in S^{2n-1}.\end{align*} $A_{x,y}$ is smooth and is a submersion at $I_{2n}$.
\end{cor}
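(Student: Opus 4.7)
The plan is to exploit the complex identification $\R^{2n} \simeq \C^n$ in order to reduce the kernel of the differential of $A_{x,y}$ at $I_{2n}$ to the vector space handled by the preceding lemma. Smoothness of $A_{x,y}$ is immediate: it is the restriction to the submanifold $\Gamma_n$ of the smooth (in fact linear) map $g \in \mathscr{M}_{2n}(\R) \longmapsto g\cdot(x,y)$.

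For the submersion claim, I would first invoke the description of $T_{I_{2n}}\Gamma_n$ given by the previous proposition (specialized to $A = I_n$, $B = 0$): it consists of the matrices $\mathcal{M}_{M,N}$ with $M \in AS_n(\R)$ and $N \in S_n(\R)$, and has real dimension $n^2$. The differential of $A_{x,y}$ at $I_{2n}$ sends such a vector to
$$\mathcal{M}_{M,N}\cdot(x,y) = (Mx + Ny,\;My - Nx) \in T_{(x,y)}S^{2n-1}.$$
Since $\dim T_{(x,y)} S^{2n-1} = 2n-1$ and $n^2 - (2n-1) = (n-1)^2$, by rank--nullity it is enough to prove that the kernel of this differential has real dimension exactly $(n-1)^2$.

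The key observation is to write $z := x + iy \in \C^n\setminus\{0\}$ and, given $(M,N) \in AS_n(\R)\times S_n(\R)$, to form the matrix $H := N + iM$. A direct check shows that $H \in \mathscr{H}_n$ and that $(M,N) \mapsto H$ is a real-linear bijection $AS_n(\R) \times S_n(\R) \longrightarrow \mathscr{H}_n$ (this uses the unique decomposition of a Hermitian matrix into a real symmetric and a pure-imaginary skew-symmetric part). Moreover,
$$(Mx+Ny) + i(My - Nx) = (M - iN)z = -iHz,$$
so $\mathcal{M}_{M,N}$ lies in the kernel of the differential if and only if $Hz = 0$. The kernel is thus identified with $\{H \in \mathscr{H}_n : Hz = 0\}$, which by the preceding lemma has real dimension $(n-1)^2$, as required.

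The main obstacle is really just to set up the real-to-complex dictionary cleanly so that the lemma applies; once the assignment $H = N + iM$ is in place, the rest of the argument is routine linear algebra.
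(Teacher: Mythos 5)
Your proposal is correct and follows essentially the same approach as the paper: compute the tangent space $T_{I_{2n}}\Gamma_n$, pass to the complex identification $N+iM \in \mathscr{H}_n$, and recognize the kernel of the differential as $K_z$ from the preceding lemma, then apply rank--nullity. The only cosmetic difference is your choice $z = x+iy$ versus the paper's $z = y - ix = -i(x+iy)$, which of course defines the same kernel.
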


\begin{proof}
Clearly, $A_{x,y}$ is well-defined (because $\Gamma_n$ is made with orthogonal matrices) and smooth. Let $B_{x,y}(g) = A_{x,y}(g) \in \R^{2n}$, let us prove that the derivative (let us call it $d$) of $B_{x,y}$ at $I_{2n}$ has rank $2n-1$, which will end the proof. 
It is enough to show that its kernel has dimension $(n-1)^2$. 
Now, let us see that $$T_{I_{2n}}(\Gamma_n) = \left\{\begin{bmatrix}M & N \\ -N & M\end{bmatrix},N \in S_n(\R),M \in AS_n(\R)\right\},$$ and that 
$$d\left(\begin{bmatrix}M & N \\ -N & M\end{bmatrix}\right) = (Mx+Ny,-Nx+My) = (\Real((N+iM)(y-ix)),\Imag((N+iM)(y-ix))).$$
Let us denote $z = y-ix \in \C^n \backslash \{0\}$, from the above we deduce that as a real vector space, $\Ker d$ is isomorphic to $\{N+iM,N \in S_n(\R), M \in AS_n(\R),(N+iM)z = 0\} = K_z$. 
\end{proof}

Now, we can prove the proposition \ref{ortho_symp_trans}.

\begin{proof}
Let $g \in \Gamma_n$, let $(x,y) \in S^{2n-1}$. Let $f: h \in \Gamma_n \longmapsto hg$, we see then that $A_{x,y} \circ f = A_{g(x,y)}$. Thus the derivative of $A_{x,y} \circ f$ at $I_{2n}$ is surjective, so the derivative of $A_{x,y}$ at $g = f(I_{2n})$ is surjective too. Therefore $A_{x,y}$ is a submersion. 
Since $A_{x,y}$ is a submersion, it is open and its image is open in $S^{2n-1}$. In addition, $A_{x,y}$ is continuous and defined on a compact topological space, thus its image in $S^{2n-1}$ is compact, hence closed. Since $S^{2n-1}$ is connected, $A_{x,y}$ is onto.

We have seen above that the vector field of which we take the flow is the one given by $J$. So the Hamiltonian flow at time $t$ of $f:z \in \R^{2n} \longmapsto |z|^2$ is the linear isomorphism given by $\exp\left(2tJ\right)$. Since $\Gamma_n$ commutes with $J$, it also commutes with $\exp(sJ)$ for any real number $s$, thus $\Gamma_n$ maps orbits one to another. Since it can map any point of $S^{2n-1}$ to any other, and since there is only one orbit going through a given point, the rest of the statement holds.
\end{proof}

\subsection{Elementary measures are limits of eigenfunctions}

Recall that we defined elementary measures as uniform probability measures that are supported on one single orbit of the flow. 

Recall that from \cite[Theorem 11.9]{ev-zw}, for every symplectic linear operator $A \in \mathcal{L}(\R^{2d})$, and for every $h > 0$, there exists a unitary operator $T_{A,h} \in \mathcal{L}(L^2(\R^d))$ such that for every $a \in S(x^2+\xi^2+1)$, $(a \circ A)^w(x,hD) = T_{A,h}^*a^w(x,hD)T_{A,h}$. 

We recall that we defined $f_n$ in section $1.3$ and set, as in there, $h_n = \frac{1}{2n+d}$. 
For any $n > 0$, $A \in \Gamma_d$, we define $f_n^A = T_{A,h_n}f_n$. 

\begin{lem}
\label{areeigen}
For any $n > 0$, $(-h_n^2\Delta+|x|^2)f_n^A = f_n^A$.
\end{lem}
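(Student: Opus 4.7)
The plan is to exploit the fact that the quantum harmonic oscillator is, in symbolic terms, the Weyl quantization of $p(x,\xi) = |x|^2+|\xi|^2$, which belongs to $S(x^2+\xi^2+1)$, and that this symbol is invariant under any orthogonal change of variables in $\R^{2d}$. Since elements of $\Gamma_d$ are simultaneously orthogonal and symplectic, the hypothesis of the intertwining result cited from \cite[Theorem 11.9]{ev-zw} applies to $p$.

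More concretely, I would first recall that in Weyl quantization $p^w(x,h_nD) = -h_n^2\Delta+|x|^2$, so that the assumption from Section $1.3$ reads $p^w(x,h_nD)f_n = f_n$. Next, because $A \in \Gamma_d$ is orthogonal, $p\circ A = p$ pointwise on $\R^{2d}$. Applying \cite[Theorem 11.9]{ev-zw} to the symbol $p \in S(x^2+\xi^2+1)$ and the symplectic map $A$ therefore gives
\begin{equation*}
p^w(x,h_nD) = (p\circ A)^w(x,h_nD) = T_{A,h_n}^*\,p^w(x,h_nD)\,T_{A,h_n}.
\end{equation*}
Since $T_{A,h_n}$ is unitary on $L^2(\R^d)$, this rearranges into the intertwining relation
\begin{equation*}
T_{A,h_n}\,p^w(x,h_nD) = p^w(x,h_nD)\,T_{A,h_n}.
\end{equation*}

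Applying both sides to $f_n$ and using $p^w(x,h_nD)f_n = f_n$ then yields
\begin{equation*}
p^w(x,h_nD)f_n^A = p^w(x,h_nD)T_{A,h_n}f_n = T_{A,h_n}p^w(x,h_nD)f_n = T_{A,h_n}f_n = f_n^A,
\end{equation*}
which is the claim. I expect the only subtlety to be verifying that $p$ lies in the symbol class for which the metaplectic Egorov statement of \cite[Theorem 11.9]{ev-zw} is formulated; once one observes that $p \in S(x^2+\xi^2+1)$, the rest is a one-line algebraic manipulation.
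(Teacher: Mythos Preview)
Your proof is correct and follows essentially the same route as the paper: both use $p\circ A=p$ (from orthogonality of $A$) together with the metaplectic Egorov identity $(p\circ A)^w=T_{A,h_n}^*\,p^w\,T_{A,h_n}$ from \cite[Theorem 11.9]{ev-zw} to conclude $p^w(x,h_nD)f_n^A=f_n^A$. The only cosmetic difference is that you isolate the commutation $T_{A,h_n}p^w=p^wT_{A,h_n}$ explicitly, while the paper folds the same computation into a single chain of equalities.
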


\begin{proof}
Observe that $p \circ A=p$, thus $$f_n^A=T_{A,h_n}(p \circ A)^w(x,h_nD)f_n=T_{A,h_n}T_{A,h_n}^*p^w(x,h_nD)T_{A,h_n}f_n = p^w(x,h_nD)f_n^A.$$
\end{proof}

\noindent
Let $\mathcal{C}_1 = \{|x|^2+|\xi|^2=x_1^2+\xi_1^2=1\}$. Recall from proposition \ref{oneorbitdim} that for any $a \in \mathscr{C}^{\infty}_c(\R^{2d})$, 
$$\langle a^w(x,h_nD)f_n,\,f_n\rangle \longrightarrow \frac{1}{2\pi}\int_{\mathcal{C}_1}{a\,ds}.$$

\begin{prop}
Let $A \in \Gamma_d$. Then, for any $a \in \mathscr{C}^{\infty}_c(\R^d)$, as $n \longrightarrow \infty$:
$$\langle a^w(x,h_nD)f_n^A,\,f_n^A\rangle \longrightarrow \frac{1}{2\pi}\int_{A\mathcal{C}_1}{a\,ds}.$$
\end{prop}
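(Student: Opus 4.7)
The plan is to exploit the unitarity of $T_{A,h_n}$ together with the intertwining property $(a\circ A)^w(x,hD) = T_{A,h}^{\ast}a^w(x,hD)T_{A,h}$ recalled from \cite[Theorem 11.9]{ev-zw}, so as to reduce the claim to Proposition \ref{oneorbitdim} applied to the pulled-back symbol $a\circ A$, and then transport the resulting integral along $\mathcal{C}_1$ to an integral along $A\mathcal{C}_1$ by a change of variables that works because $A$ is orthogonal.

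First, since $a \in \mathscr{C}^{\infty}_c(\R^{2d})$ and $A$ is a linear automorphism of $\R^{2d}$, the composition $a\circ A$ is also compactly supported and smooth, hence certainly lies in $S(x^2+|\xi|^2+1)$; this makes the Egorov-type identity applicable. Unitarity of $T_{A,h_n}$ then gives
\begin{align*}
\langle a^w(x,h_nD) f_n^A, f_n^A \rangle
&= \langle a^w(x,h_nD) T_{A,h_n} f_n, T_{A,h_n} f_n\rangle \\
&= \langle T_{A,h_n}^{\ast} a^w(x,h_nD) T_{A,h_n} f_n, f_n\rangle \\
&= \langle (a\circ A)^w(x,h_nD) f_n, f_n\rangle.
\end{align*}

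Next, I apply Proposition \ref{oneorbitdim} to the symbol $a\circ A \in \mathscr{C}^{\infty}_c(\R^{2d})$ to obtain
$$\langle (a\circ A)^w(x,h_nD) f_n, f_n\rangle \longrightarrow \frac{1}{2\pi}\int_{\mathcal{C}_1}{(a\circ A)\,ds}.$$
It then remains to identify this with the claimed right-hand side. Since $A \in \Gamma_d$ is orthogonal, its restriction to the smooth curve $\mathcal{C}_1$ is an isometry onto $A\mathcal{C}_1$, so arc length is preserved and the change of variables gives
$$\int_{\mathcal{C}_1}{(a\circ A)\,ds} = \int_{A\mathcal{C}_1}{a\,ds},$$
which concludes the proof.

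I do not expect any serious obstacle here: the only thing to verify carefully is the scope of validity of the metaplectic intertwining formula, but compact support of $a$ places us comfortably inside the symbol class for which \cite[Theorem 11.9]{ev-zw} is stated, and the change of variables is immediate from $A$ being an isometry of $\R^{2d}$ preserving the submanifold structure of $\mathcal{C}_1$.
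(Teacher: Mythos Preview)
Your proof is correct and follows exactly the same approach as the paper: conjugate by the metaplectic operator to reduce to $(a\circ A)^w$ acting on $f_n$, apply Proposition \ref{oneorbitdim}, and use orthogonality of $A$ for the change of variables. If anything, you spell out the justifications (why $a\circ A$ lies in the right symbol class, why arc length is preserved) slightly more explicitly than the paper does.
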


\begin{proof}
\begin{align*}
\langle a^w(x,h_nD)f_n^A,\,f_n^A\rangle &= \langle a^w(x,h_nD)T_{A,h_n}f_n,\,T_{A,h_n}f_n\rangle\\
&= \langle T_{A,h_n}^*a^w(x,h_nD)T_{A,h_n}f_n,\,f_n\rangle \\
&= \langle (a \circ A)^w(x,h_nD)f_n,\,f_n\rangle \\
&\longrightarrow \frac{1}{2\pi}\int_{\mathcal{C}_1}{a\circ A\,ds} = \frac{1}{2\pi}\int_{A\mathcal{C}_1}{a\,ds}
\end{align*}
\end{proof}

\section{End of the proof}

\subsection{Convex measures}~\\
Recall that we defined a convex measure as a convex combination of elementary measures, elementary measures being invariant measures that are supported along a single orbit. 

We will need one lemma to prove the theorem for convex measures:

\begin{lem}
Let $b \in S(1)$, $A \in \Gamma_d$. Assume that $b$ is zero near $A\mathcal{C}_1$. Then $b^w(x,h_nD)f_n^A = o_{L^2}(1)$. 
\end{lem}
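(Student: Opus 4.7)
The plan is to reduce to the case $A = \Id$ via unitary conjugation, and then exploit the fact that $f_n$ is a joint approximate eigenfunction of $p^w(x,h_nD)$ and $p_1^w(x,h_nD)$, where $p_1(x,\xi) := x_1^2 + \xi_1^2$. Indeed $p^w f_n = f_n$ holds exactly, while by the tensor-product structure of $f_n$ one has $p_1^w f_n = (2n+1)h_n f_n$; subtracting yields the key identity
$$q^w(x,h_nD) f_n = (d-1)h_n f_n, \qquad q(x,\xi) := \sum_{j=2}^{d}(x_j^2+\xi_j^2),$$
so that $q^w f_n = O(h_n)$ in $L^2$. Crucially, $\mathcal{C}_1 = S^{2d-1} \cap \{q = 0\}$.

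The first step would use unitarity of $T_{A,h_n}$ and the intertwining $T_{A,h_n}^* b^w T_{A,h_n} = (b \circ A)^w$ to get $\|b^w f_n^A\|_{L^2} = \|(b \circ A)^w f_n\|_{L^2}$. Setting $c := b \circ A \in S(1)$, the hypothesis on $b$ says $c$ vanishes on a neighborhood of $\mathcal{C}_1$; so it suffices to prove $c^w f_n = o_{L^2}(1)$ for every $c \in S(1)$ vanishing near $\mathcal{C}_1$.

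Next, a partition of unity reduces $c$ to a compactly supported piece whose support meets $\{q \geq \varepsilon\}$ for some $\varepsilon > 0$. Pick $\phi \in \mathscr{C}^{\infty}_c(\R^{2d})$ equal to $1$ near $S^{2d-1}$ and $\psi \in \mathscr{C}^{\infty}_c(\R)$ equal to $1$ near $0$, and decompose
$$c = (1-\phi)c + \phi\bigl(1 - \psi(p-1)\bigr)c + \phi\psi(p-1)c.$$
The first two terms vanish in a neighborhood of $S^{2d-1}$, hence contribute $o_{L^2}(1)$ by Proposition \ref{micro}. The remaining piece $c' := \phi\psi(p-1)c$ lies in $\mathscr{C}^{\infty}_c(\R^{2d})$, is supported where $|p-1|$ is small, and still vanishes near $\mathcal{C}_1 = \{p=1,\,q=0\}$; this forces $q \geq \varepsilon > 0$ on $\operatorname{supp}(c')$.

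The final step extracts a factor of $q$ from $c'$. Choose $\phi_0 \in \mathscr{C}^{\infty}_c(\R)$ coinciding with $t \mapsto 1/t$ on the bounded range of $q$ over $\operatorname{supp}(c')$, and define $\tilde c := c' \cdot (\phi_0 \circ q) \in \mathscr{C}^{\infty}_c(\R^{2d})$; then $c' = \tilde c \cdot q$ as functions. Because $q$ is a polynomial of total degree $2$, the Moyal expansion $\tilde c \# q$ terminates at order $h_n^2$, with each correction a compactly supported symbol (involving derivatives of $\tilde c$ paired with derivatives of $q$). Hence $(\tilde c q)^w = \tilde c^w q^w + h_n R_1^w + h_n^2 R_2^w$ with $R_1, R_2 \in S(1)$. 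Applying this to $f_n$ and combining the identity $q^w f_n = (d-1)h_n f_n$ with the $L^2$-boundedness of $\tilde c^w, R_1^w, R_2^w$ supplied by Theorem \ref{bounded} gives $c'^w f_n = O(h_n) = o_{L^2}(1)$. The main subtlety is ensuring the symbol calculus operates cleanly despite $q$ being unbounded, and this is precisely what the termination of the Moyal series (due to $q$ being polynomial of degree $2$) takes care of.
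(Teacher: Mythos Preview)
Your proof is correct but follows a genuinely different route from the paper's. The paper's argument is a two-line computation: after reducing (via Proposition~\ref{micro}) to compactly supported $b$, it writes
\[
\|b^w(x,h_nD)f_n^A\|_{L^2}^2=\langle \overline{b}^w b^w f_n^A,f_n^A\rangle
=\langle(|b|^2)^w f_n^A,f_n^A\rangle+O(h_n)
\longrightarrow \frac{1}{2\pi}\int_{A\mathcal{C}_1}|b|^2\,ds=0,
\]
invoking the symbol calculus (Theorem~\ref{compo_S1}) and the already-proved convergence $\langle a^w f_n^A,f_n^A\rangle\to(2\pi)^{-1}\int_{A\mathcal{C}_1}a\,ds$. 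In contrast, you bypass that convergence result entirely: after conjugating to $A=\Id$, you exploit the \emph{second} exact eigenvalue equation $q^w f_n=(d-1)h_n f_n$ (coming from the tensor structure of $f_n$) and factor $q$ out of the symbol via a terminating Moyal expansion. Your approach is more self-contained and makes explicit why $\mathcal{C}_1$ is distinguished (it is the joint zero set of $p-1$ and $q$), at the cost of a longer argument; the paper's approach is shorter because it cashes in on the weak-limit machinery already established in Section~3. One small point worth tightening: the claim that $q\geq\varepsilon$ on $\operatorname{supp}(c')$ requires choosing the support of $\psi$ small enough relative to the neighborhood of $\mathcal{C}_1$ on which $c$ vanishes; this is true but should be said explicitly.
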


\begin{proof}
Using \ref{micro} and \ref{areeigen}, we may assume that $b$ is compactly supported. Then, using \ref{compo_S1} and \ref{bounded}
\begin{align*}
\|b^w(x,h_nD)f_n^A\|^2_{L^2} &= \langle b^w(x,h_nD)f_n^A,\,b^w(x,h_nD)f_n^A\rangle\\
&= \langle \overline{b}^w(x,h_nD)b^w(x,h_nD)f_n^A,\,f_n^A\rangle\\
&= \langle (\left(\overline{b}b+O_{S(1)}(h_n)\right)^w(x,h_nD)f_n^A,\,f_n^A\rangle\\
&= O(h_n) + \langle (|b|^2)^w(x,h_nD)f_n^A,\,f_n^A\rangle\\
&= O(h_n)+o(1)+\frac{1}{2\pi}\int_{A\mathcal{C}_1}{|b|^2\,ds} \longrightarrow 0
\end{align*}
\end{proof}

\begin{cor}
\label{mixedterms}
Let $c \in S(1)$. Let $A,B \in \Gamma_d$ be such that $\mathcal{A} = A\mathcal{C}_1$ and $\mathcal{B} = B\mathcal{C}_1$ are distinct (hence disjoint) orbits. Then, as $n \longrightarrow \infty$, $$\langle c^w(x,h_nD)f_n^A,\,f_n^B\rangle \longrightarrow 0.$$
\end{cor}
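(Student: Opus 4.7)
The plan is to localize the symbol $c$ separately near the two disjoint orbits and then apply the preceding lemma twice. Since $\mathcal{A}$ and $\mathcal{B}$ are compact disjoint subsets of $\R^{2d}$, I pick $\chi \in \mathscr{C}^{\infty}_c(\R^{2d})$ equal to $1$ on a neighborhood of $\mathcal{B}$ and to $0$ on a neighborhood of $\mathcal{A}$, and decompose $c = c\chi + c(1-\chi)$. Both summands lie in $S(1)$, the first vanishes near $\mathcal{A}$, and the second vanishes near $\mathcal{B}$.

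For the first piece, the previous lemma applied to $b = c\chi$ gives $(c\chi)^w(x,h_nD)f_n^A = o_{L^2}(1)$; since $\|f_n^B\|_{L^2}=1$ (because $T_{B,h_n}$ is unitary), Cauchy--Schwarz bounds the contribution of this piece by an $o(1)$ quantity. For the second piece, I use the standard Weyl adjoint identity $(a^w)^* = \overline{a}^w$ to move the operator onto the $f_n^B$ slot:
\begin{equation*}
\langle (c(1-\chi))^w(x,h_nD) f_n^A,\,f_n^B\rangle \;=\; \langle f_n^A,\,(\overline{c(1-\chi)})^w(x,h_nD) f_n^B\rangle.
\end{equation*}
Since $\overline{c(1-\chi)}$ still vanishes near $\mathcal{B}$, the previous lemma applied with $B$ in place of $A$ yields $(\overline{c(1-\chi)})^w(x,h_nD) f_n^B = o_{L^2}(1)$, and Cauchy--Schwarz finishes the job.

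I do not expect any step to be a real obstacle: the argument is a direct splitting by a smooth cutoff, relying only on the disjointness and compactness of the two orbits, on the unitarity of the metaplectic operators $T_{A,h_n}$ and $T_{B,h_n}$ (to keep $\|f_n^A\|_{L^2}=\|f_n^B\|_{L^2}=1$), and on the preceding lemma.
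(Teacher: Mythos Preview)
Your proof is correct. It and the paper's argument both rest on the preceding lemma together with smooth cutoffs separating the two disjoint orbits, but the decompositions differ slightly. The paper chooses real cutoffs $a,b\in S(1)$ with disjoint supports, equal to $1$ near $\mathcal{A}$ and $\mathcal{B}$ respectively, replaces $f_n^A$ and $f_n^B$ by $a^w f_n^A$ and $b^w f_n^B$ (the lemma controls the error), and then observes via the composition calculus that $b^w c^w a^w$ has principal symbol $abc=0$, hence is $O_{L^2\to L^2}(h_n)$. You instead split the \emph{symbol} $c=c\chi+c(1-\chi)$ with a single cutoff and apply the lemma once on each side, using the Weyl adjoint identity to move the second piece onto $f_n^B$. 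Your route is a touch more direct---it avoids the extra appeal to the composition formula inside the corollary---while the paper's version makes the $O(h_n)$ smallness of the sandwiched operator explicit; both give the same $o(1)$ conclusion.
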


\begin{proof}
There exists real-valued $a,b \in S(1)$ that are $1$ near $\mathcal{A}$ and $\mathcal{B}$ and with disjoint supports. Then, in $L^2$, $f_n^A = a^w(x,h_nD)f_n^A+o(1)$ and $f_n^B = b^w(x,h_nD)f_n^B+o(1)$. Hence $\langle c^w(x,h_nD)f_n^A,\,f_n^B\rangle = o(1) + \langle b^w(x,h_nD)c^w(x,h_nD)a^w(x,h_nD)f_n^A,\,f_n^B\rangle$, because $b^w(x,h_nD)$ is self-adjoint and because of \ref{bounded}. From \ref{compo_S1}, $b^w(x,hD)c^w(x,hD)a^w(x,hD) = (bca)^w(x,hD) + O_{S(1)}(h) = o_{S(1)}(1)$. Using \ref{bounded} ends the proof. 
\end{proof}

\begin{prop}
Let $\mu$ be a convex measure on $S^{2d-1}$. For each $n$, there exists an eigenfunction $g_n$ of the quantum harmonic oscillator (with parameter $h_n$) (with eigenvalue $1$, and unit $L^2$ norm) such that 
$$\forall a \in \mathscr{C}^{\infty}_c(\R^{2d}),\, \langle a^w(x,h_nD)g_n,\,g_n\rangle \longrightarrow \int_{S^{2d-1}}{a\,d\mu}.$$
\end{prop}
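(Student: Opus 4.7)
The plan is to build $g_n$ as a linear combination of the building blocks $f_n^{A_j}$ from Section 3, one for each elementary measure entering the convex decomposition of $\mu$. Write $\mu = \sum_{j=1}^N \lambda_j \mu_j$ with the $\mu_j$ pairwise distinct elementary measures and $\lambda_j > 0$. By Proposition \ref{ortho_symp_trans}, for each $j$ we may pick $A_j \in \Gamma_d$ such that $\mathcal{A}_j := A_j \mathcal{C}_1$ is the orbit carrying $\mu_j$. Set
\[
\tilde g_n := \sum_{j=1}^N \sqrt{\lambda_j}\, f_n^{A_j}, \qquad g_n := \tilde g_n / \|\tilde g_n\|_{L^2}.
\]
Each $f_n^{A_j}$ is an eigenfunction of $-h_n^2 \Delta + |x|^2$ with eigenvalue $1$ by Lemma \ref{areeigen}, hence so are $\tilde g_n$ and $g_n$, and $g_n$ has unit $L^2$ norm by construction.

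To see that $\|\tilde g_n\|_{L^2} \to 1$, expand
\[
\|\tilde g_n\|_{L^2}^2 = \sum_{j,k} \sqrt{\lambda_j \lambda_k}\, \langle f_n^{A_j},\, f_n^{A_k}\rangle.
\]
The diagonal sum equals $\sum_j \lambda_j = 1$ since each $f_n^{A_j}$ has unit $L^2$ norm, while every off-diagonal inner product $\langle f_n^{A_j}, f_n^{A_k}\rangle$ with $j \neq k$ tends to zero by Corollary \ref{mixedterms} applied to $c = 1 \in S(1)$ and the distinct orbits $\mathcal{A}_j, \mathcal{A}_k$.

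For the Wigner convergence, expand similarly
\[
\langle a^w(x,h_nD)\tilde g_n,\, \tilde g_n\rangle = \sum_{j=1}^N \lambda_j \langle a^w(x,h_nD) f_n^{A_j},\, f_n^{A_j}\rangle + \sum_{j \neq k} \sqrt{\lambda_j \lambda_k}\, \langle a^w(x,h_nD) f_n^{A_j},\, f_n^{A_k}\rangle.
\]
Each diagonal term converges to $\lambda_j \cdot \frac{1}{2\pi}\int_{\mathcal{A}_j} a\, ds = \lambda_j \int a\, d\mu_j$ by the last proposition of Section 3, so the diagonal contribution tends to $\sum_j \lambda_j \int a\, d\mu_j = \int a\, d\mu$. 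The off-diagonal terms vanish in the limit by Corollary \ref{mixedterms}, since the $\mathcal{A}_j$ are pairwise distinct and $a \in \mathscr{C}^{\infty}_c \subset S(1)$. Dividing by $\|\tilde g_n\|_{L^2}^2 \to 1$ gives the stated convergence.

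There is no real obstacle once Sections 2 and 3 are in place: the construction is essentially formulaic, and both the normalization and the Wigner asymptotic reduce by bilinear expansion to two ingredients already established, namely the diagonal convergence along a single orbit and the vanishing of off-orbit cross terms from Corollary \ref{mixedterms}.
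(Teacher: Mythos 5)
Your proof is correct and follows essentially the same route as the paper: decompose $\mu$ into elementary pieces, build $\tilde g_n = \sum_j \sqrt{\lambda_j}\, f_n^{A_j}$ using the ortho-symplectic translates, kill cross terms via Corollary~\ref{mixedterms}, and normalize. You simply spell out the bilinear expansion and the normalization argument in more detail than the paper does.
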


\begin{proof}
We can write $\mu = \sum_{i=1}^p{\lambda_ic_i}$, where $\lambda_i$ are non-negative real numbers adding up to $1$, and for any $a \in \mathscr{C}^0(S^{2d-1})$, $\int_{S^{2d-1}}{a\,dc_i} = \frac{1}{2\pi}\int_{\mathcal{C}_i}{f\,ds}$, where $\mathcal{C}_i$ are distinct orbits. 

Let, for $1 \leq i \leq p$, $A_i \in \Gamma_d$ be such that $\mathcal{C}_i = A_i\mathcal{C}$. Let $g_n^{(1)} = \sum_{i=1}^p{\sqrt{\lambda_i}f_n^{A_i}}$.

Then corollary \ref{mixedterms} proves that for any $a \in \mathscr{C}^{\infty}_c(\R^{2d})$, $$\langle a^w(x,h_nD)g_n^{(1)}\,,g_n^{(1)}\rangle \longrightarrow \int_{S^{2d-1}}{ad\mu}, \|g_n^{(1)}\|_{L^2} = 1+o(1).$$ Now take $g_n = \|g_n^{(1)}\|_{L^2}^{-1}g_n^{(1)}$ for each $n$. 
\end{proof}

\subsection{How to get all measures}~\\
Let us begin with the following theorem:

\begin{prop}
The set of convex measures is dense in the set of probability measures on $S^{2d-1}$ that are invariant with respect to the Hamilton flow, in the weak-* topology.
\end{prop}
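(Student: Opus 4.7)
The plan is to reduce the problem to approximation of probability measures on the orbit space $Q := S^{2d-1}/\sim$, where discretization by Dirac masses is a standard fact. First, I would identify the Hamiltonian flow of $p$ on $S^{2d-1}$ with the Hopf $S^1$-action: under the identification $\R^{2d} \cong \C^d$ via $z = x+i\xi$, the orbits are the circles $t \mapsto e^{2it}z$, so the action is smooth and free, and the quotient $Q$ (which is diffeomorphic to $\mathbb{CP}^{d-1}$) is a compact Hausdorff manifold with continuous, open quotient map $\pi: S^{2d-1} \to Q$. For any $f \in C(S^{2d-1})$ the orbit average
$$\tilde f(z) := \frac{1}{2\pi}\int_0^{2\pi}{f(e^{2it}z)\,dt}$$
is continuous and flow-invariant, hence factors through a continuous function $\bar f$ on $Q$; and every element of $C(Q)$ arises this way by pullback along $\pi$.

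Next, I would use the following reduction. For any flow-invariant probability measure $\mu$ on $S^{2d-1}$ and any $f \in C(S^{2d-1})$, invariance together with Fubini yields
$$\int_{S^{2d-1}}{f\,d\mu} = \int_{S^{2d-1}}{\tilde f\,d\mu} = \int_Q{\bar f\,d(\pi_*\mu)},$$
so $\mu \longmapsto \pi_*\mu$ is a weak-$*$ homeomorphism from the invariant probability measures on $S^{2d-1}$ onto the probability measures on $Q$. Its inverse is the lift obtained by disintegration along orbits (each orbit carrying its normalized arc-length measure), and under this bijection a finitely-supported measure $\sum_{i=1}^p \lambda_i \delta_{q_i}$ on $Q$ corresponds exactly to a convex measure $\sum_{i=1}^p \lambda_i \cdot (\text{uniform on } \pi^{-1}(q_i))$ on $S^{2d-1}$.

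Finally, the problem reduces to the standard fact that finitely-supported probability measures are weak-$*$ dense in the probability measures on the compact metrizable space $Q$, which I would prove by hand: given $\bar\mu := \pi_*\mu$, a finite family $\bar f_1, \ldots, \bar f_N \in C(Q)$, and $\eps > 0$, use uniform continuity to produce a finite Borel partition $Q = U_1 \sqcup \cdots \sqcup U_p$ so small that each $\bar f_j$ varies by less than $\eps$ on each $U_i$, pick representatives $q_i \in U_i$, and check that $\sum_i \bar\mu(U_i)\delta_{q_i}$ approximates $\bar\mu$ within $\eps$ against each $\bar f_j$; lifting back via the bijection of the previous paragraph yields a convex measure approximating $\mu$ in weak-$*$. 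The only non-routine input is the identification of $Q$ as a compact Hausdorff (in fact metrizable) space and of $C(Q)$ with the invariant continuous functions on $S^{2d-1}$, both of which are immediate from the explicit Hopf picture, so I do not expect any real obstacle.
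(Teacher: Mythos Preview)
Your argument is correct and takes a genuinely different route from the paper. The paper works abstractly: it observes that the set $K$ of flow-invariant probability measures on $S^{2d-1}$ is convex and weak-$*$ compact, then shows that the extremal points of $K$ are exactly the elementary measures (if $\mu$ is not supported on a single orbit, one splits $S^{2d-1}$ into two flow-invariant Borel sets each carrying positive $\mu$-mass, which exhibits $\mu$ as a nontrivial convex combination), and concludes by Krein--Milman. Your approach instead passes to the orbit space $Q \cong \mathbb{CP}^{d-1}$, identifies invariant probability measures on $S^{2d-1}$ with arbitrary probability measures on $Q$ via pushforward and orbit-averaging, and then invokes the elementary fact that finitely supported probability measures are weak-$*$ dense on a compact metric space.

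The trade-offs: the paper's proof is shorter once one is willing to quote Krein--Milman, and it has the conceptual payoff of explicitly identifying the extreme points of $K$. Your proof is more self-contained and constructive --- it avoids Krein--Milman entirely and actually produces the approximating convex measure from a partition of $Q$ --- at the cost of setting up the quotient picture (which, as you note, is immediate from the Hopf fibration). Both arguments are standard and of comparable difficulty; neither has a real advantage for the purposes of this paper.
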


\begin{proof}
Let $V$ be the locally convex topological vector space of all measures on $S^{2d-1}$, endowed with the weak-* convergence topology. Let $K \subset V$ be the set of all invariant probability measures on $S^{2d-1}$. $K$ is convex, closed (it can be defined by a set of equations involving only continuous functions on $S^{2d-1}$) and contained in the unit ball of $V$ ($V$ is the dual space of $\mathscr{C}(S^{2d-1})$ which is a Banach space so has a natural norm), which is compact in the weak-* topology (Banach-Alaoglu theorem), so $K$ is convex and compact. \\
To prove the claim, it is enough to show that the extremal points of $K$ (as a convex set) are the elementary measures (then we apply the Krein-Milman theorem). \\
Now, let $\mu \in K$ be extremal, assume it is not elemnetary. Then we can find $x,y \in S^{2d-1}$ in the support of $\mu$ that do not belong to the same orbit. So we can take some small open set $U \subset S^{2d-1}$ containing $x$, such that for every $x' \in \overline{U}$, $x'$ and $y$ do not belong to the same orbit. Let $X$ be the union of all orbits intersecting $\overline{U}$, and $Y = S^{2d-1} \backslash X$. Let $\mu_1 = \frac{\mu(\bullet \cap X)}{\mu(X)}$, $\mu_2 = \frac{\mu(\bullet \cap Y)}{\mu(Y)}$, then $\mu = (1-\mu(X))\mu_2 + \mu(X)\mu_1$, and $0 < \mu(X) < 1$, and $\mu_1$ and $\mu_2$ are in $K$ and distinct. Thus, the support of $\mu$ is a single orbit $\mathcal{C}_0$, and since $\mu$ is invariant under the Hamilton flow, $\mu$ is the corresponding elementary measure (that is $\mu:\,a \longmapsto \frac{1}{2\pi}\int_{\mathcal{C}_1}{a\,ds}$), which proves the claim. 
\end{proof}

~\\

Roughly speaking, we would like to argue as follows:
\begin{enumerate}
\item All convex measures are limits of eigenvectors.
\item All suitable measures are limits of convex measures.
\item Therefore, all suitable measures are limits of eigenvectors.
\end{enumerate}

~\\

As stated, this reasoning is not correct, as shown by the following example: it is known that $1_{\Q}$ is not a pointwise limit of continuous functions, see for instance \cite[Theorem 5]{baire}. However, let $f_n = 1_{\Z/n!}$. It is easy to see that $f_n \longrightarrow 1_{\Q}$ pointwise, and that $f_n(x) = \lim\limits_{p \rightarrow \infty}e^{-p|\sin(n!x\pi)|}$, so $f_n$ is a pointwise limit of continuous functions. 

~\\
However, here is a fix: we will now prove that there exists a metric space $X$, containing all the linear forms induced by the eigenfunctions we have used, and all of the limits we are interested in, such that the metric over $X$ defines the same topology as the weak-* convergence. 

Recall from \ref{bounded} that, for any $u \in L^2(\R^n)$ with norm $1$, any invariant probability measure $\mu$ on $S^{2d-1}$, any $0 < h < 1$, any $a \in \mathscr{C}^{\infty}_c{\R^{2d}}$, 
\begin{align*}
|\langle a^w(x,hD)u\,,u\rangle| &\leq C\|a\|_{\mathscr{C}^{Md}_b(\R^{2d})}\\
\left|\int_{S^{2d-1}}{a\,d\mu}\right| &\leq C\|a\|_{\mathscr{C}^{Md}_b(\R^{2d})},
\end{align*}
where $C,M \geq 1$ depend only on $d$. So we set $X$ to be the ball of $B^*$ with center $0$ and radius $C$, where $B$ is the completion in the $\mathscr{C}^{Md}_b$ norm of $\mathscr{C}^{\infty}_c(\R^{2d})$, that is, $B$ is the Banach space of all $\mathscr{C}^{Md}$ functions defined over $\R^{2d}$ such that all their derivatives of order at most $Md$ go to zero at infinity. 

The space $X$, with the weak-* topology (we denote it $X_*$ as a topological space), is Hausdorff compact (Banach-Alaoglu theorem, see for instance \cite[Theorem 3.16]{HB}). Assume now that $B$ is separable. Let $(a_n) \in B^{\N}$ be a sequence of functions with norm $1$ that is dense in the unit sphere. Let $$\delta: (l,m) \in X^2 \longmapsto \sum_{n \in \N}{\min(|(l-m)(a_n)|,2^{-n})}.$$

It is easy to see that $\delta$ is a metric on $X$, and that $\Id: X_* \longrightarrow (X,\delta)$ is a bijective continuous map, therefore it is a homeomorphism and $X_*$ is a metric space.  

Now, it remains to prove that $B$ is separable. We will show that the set of test function is separable for the (stronger) test function topology and dense in $B$. The density part is well-known, for instance with \cite[Cor.2 p.159]{Trv}. 
For the separability of the set of test function, we consider a sequence $K_p$ of compact subsets of $\R^d$, such that $K_p \subset \overset{\circ}{K_{p+1}}$ and $\R^d = \bigcup_p{K_p}$. It is enough to show that for each $p$, the space $\mathscr{D}(K_p)$ of test functions with support in $K_p$ is separable. 
Now, there is a natural topology-preserving embedding $$f \in \mathscr{D}(K_p) \longmapsto \left(\partial_{\alpha}f\right)_{\alpha \in \N^d} \in \mathscr{C}^0(K_p)^{\N^d}.$$ 
Now, from \cite[Theorem 6.6]{Con}, $\mathscr{C}^0(K_p)$ is a separable metric space; from \cite[Theorems VIII-6.2, VIII-7.3, IX-5.6]{Du}, it is second-countable, so is $\mathscr{C}^0(K_p)^{\N^d}$, thus the image of the embedding is separable, and therefore $\mathscr{D}(K_p)$ is separable, which ends the proof. 

~\\

~\\

\medskip

\noindent

\end{document}